\newcommand{\Cb}{\mathbb{C}}
\newcommand{\Nb}{\mathbb{N}}
\newcommand{\Rb}{\mathbb{R}}
\newcommand{\Zb}{\mathbb{Z}}
\newcommand{\Xb}{\textbf{\upshape X}}
\newcommand{\Kc}{\mathcal{K}}
\newcommand{\Lc}{\mathcal{L}}
\newcommand{\Pc}{\mathcal{P}}
\newcommand{\Alp}{\mathcal{A}_{p}}
\newcommand{\Alpr}{\mathcal{A}_{p}^\$}
\newcommand{\ii}{{\bf i}}
\newcommand{\vertiii}[1]{{\left\vert\kern-0.25ex\left\vert\kern-0.25ex\left\vert #1
    \right\vert\kern-0.25ex\right\vert\kern-0.25ex\right\vert}}
\newcommand\pto{
   \unitlength0.1ex
   \begin{picture}(30,15)
   \put(15,16){\makebox(0,0)[]{\tiny $\Pc$}}
   \put(15,5){\makebox(0,0)[]{$\to$}}
   \end{picture}
}
\newcommand{\xRightarrow}[2][]{\ext@arrow 0359\Rightarrowfill@{#1}{#2}}
\DeclareMathOperator{\diag}{diag}
\DeclareMathOperator{\diam}{diam}
\DeclareMathOperator{\dist}{dist}
\DeclareMathOperator{\im}{im}
\DeclareMathOperator{\op}{op}
\DeclareMathOperator{\ess}{ess}
\DeclareMathOperator{\plim}{\Pc-lim}
\DeclareMathOperator{\plimn}{\underset{\it n\to\infty}{\Pc-lim \,}}
\DeclareMathOperator{\spc}{sp}
\DeclareMathOperator{\spess}{\spc_{\ess}}
\DeclareMathOperator{\supp}{supp}
\providecommand{\lb}[1]{\Lc(#1)}
\providecommand{\lc}[1]{\Kc(#1)}
\providecommand{\pb}[1]{\Lc(#1,\Pc)}
\providecommand{\pbr}[1]{\Lc^\$(#1,\Pc)}
\providecommand{\pc}[1]{\Kc(#1,\Pc)}
\newtheorem{thm}{Theorem}
\newtheorem{prop}[thm]{Proposition}
\newtheorem{cor}[thm]{Corollary}
\theoremstyle{definition}
\newtheorem{defn}[thm]{Definition}
\newtheorem{rem}[thm]{Remark}
\newtheorem{ex}[thm]{Example}
\begin{document}

\title{An Affirmative Answer to a Core Issue on Limit Operators}
\date{}
\author{Marko Lindner\footnote{Technische Universit\"at Hamburg (TUHH), Institut f\"ur Mathematik, 21073 Hamburg, Germany, \url{lindner@tuhh.de}} \, and Markus Seidel\footnote{Technische Universit\"at Chemnitz, Fakult\"at f\"ur Mathematik, 09107 Chemnitz, Germany, \url{markus.seidel@mathematik.tu-chemnitz.de}}}
\maketitle
\vspace{-0.4cm}
\begin{abstract}
An operator on an $l^{p}$-space is called band-dominated if it can be approximated,
in the operator norm, by operators with a banded matrix representation.
It is known that a rich band-dominated operator is $\Pc$-Fredholm (which is a
generalization of the classical Fredholm property) if and only if all of its
so-called limit operators are invertible and their inverses are uniformly bounded.
We show that the condition on uniform boundedness is redundant in this statement.

\medskip
\textbf{AMS subject classification:}  47A53; 47B07, 46E40, 47B36, 47L80.

\medskip
\textbf{Keywords:} Fredholm theory; Limit operator; Band-dominated operator.
\end{abstract}

\section{Introduction}

One of the really great achievements of the limit operator theory is that it provides
powerful tools for the characterization and description of the Fredholm properties
for many important classes of operators, above all the class of band-dominated operators
on generalized $l^p$-sequence spaces.
These results range from the pure characterization of the Fredholmness (or the
generalized $\Pc$-Fredholmness) of an operator $A$ in terms of the invertibility of all
its limit operators \cite{LangeR, RochFredTh, LimOps, Marko, SeFre} to
formulas for the Fredholm index of $A$ \cite{RochFredInd, Rochlp, LpInd, MarkoWiener,
SeSi2, SeSi3}, as well as for the connections of kernel and cokernel dimensions of $A$ to
the respective properties of its finite sections \cite{SeSi2, SeSi3}, formulas
for essential spectra and pseudospectra \cite{BoeTrunc, Standard, RochPS, LimOps, SeSi4}
and even formulas for the essential norm and the essential lower norm \cite{HaLiSe}.

Without going into the details here, one can roughly say that, given an operator $A$,
one obtains its limit operators by considering sequences of shifted copies of $A$ and
by passing to limits of such sequences.
This important family of all limit operators of a given $A$ is usually denoted
by $\sigma_{\op}(A)$ and referred to as its operator spectrum.
The central observation in that machinery is the following
\begin{thm}\label{TUB}
Let $A$ be a rich band-dominated operator. Then $A$ is $\Pc$-Fredholm if and only if
all its limit operators are invertible and their inverses are uniformly bounded.
\end{thm}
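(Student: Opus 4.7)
I would treat the two directions separately. The forward direction is essentially folklore once the machinery is in place; the backward direction is where the real work sits, and where the uniform-boundedness hypothesis plays its decisive role.

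For necessity, assume $A$ is $\Pc$-Fredholm, so there exists $B \in \lb{l^p}$ such that $BA - I$ and $AB - I$ both lie in $\pc{l^p}$. Given any translation sequence $h = (h_n)$ along which $A$ admits a limit operator $A_h$, the richness of $A$ lets me extract a further subsequence along which $B$ also has a limit operator $B_h$ (this is exactly the Arzel\`a--Ascoli-type content of richness: from every shift sequence one can pass to a subsequence along which the shifted operator $\Pc$-strongly converges). The decisive input now is that $\Pc$-compact operators have zero as their only limit operator; passing to the $\Pc$-strong limit in the relations $BA = I + \pc{\,}$ and $AB = I + \pc{\,}$ along the chosen subsequence therefore yields $B_h A_h = I = A_h B_h$, so that every $A_h$ is invertible with its inverse appearing as a limit operator of $B$. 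Since shifts are isometries and the operator norm is lower semicontinuous under $\Pc$-strong limits, we obtain the uniform bound $\sup_h \|A_h^{-1}\| \le \|B\|$ for free.

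For sufficiency, assume all $A_h$ are invertible with $M := \sup_h \|A_h^{-1}\| < \infty$. The plan is to construct a $\Pc$-regularizer of $A$ by a localization-and-gluing argument. Given $\epsilon > 0$, I would use the richness of $A$ together with a compactness argument on the ``directions to infinity'' to extract finitely many translations $g_1,\ldots,g_N$ and a subordinate partition of unity $M_1 + \cdots + M_N = I - P_n$, with $P_n$ in the filtration $\Pc$ and the $M_i$ given by characteristic-function multipliers, such that on the support of each $M_i$ the operator $A$ agrees with a shifted copy of $A_{g_i}$ up to norm $\epsilon$. Gluing the local inverses into a parametrix $B_\epsilon$ supported off the range of $P_n$, one verifies $A B_\epsilon = I - P_n + R_\epsilon^r$ and $B_\epsilon A = I - P_n + R_\epsilon^\ell$ with both remainders of norm $O(\epsilon)$, and a Neumann-series perturbation then upgrades $B_\epsilon$ into an honest two-sided $\Pc$-regularizer modulo $\pc{l^p}$.

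The main obstacle, and precisely the reason the theorem carries the uniform-boundedness hypothesis in its classical formulation, lies in controlling the norm of the glued parametrix $B_\epsilon$ as the partition is refined. Each local piece contributes at most $\sup_h \|A_h^{-1}\|$ to the total norm, so the finiteness of $M$ is exactly what keeps $\|B_\epsilon\| \lesssim M$ independently of $\epsilon$ and of the cardinality $N$ of the partition, which in turn is what makes the concluding Neumann-series step permissible; without $M < \infty$, the gluing construction collapses. Removing this apparent reliance on a uniform bound is the central achievement announced in the abstract, and one would expect the remainder of the paper to deduce the uniform bound from pointwise invertibility of all limit operators in the rich band-dominated setting by a finer argument.
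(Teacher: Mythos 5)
First, a point of orientation: the paper never proves Theorem~\ref{TUB} at all; it is quoted as a known result from \cite{LangeR,RochFredTh,LimOps,Marko,SeFre}, and the paper's own work (Proposition~\ref{prop:5} through Theorem~\ref{CBig}) is devoted solely to removing the uniform boundedness condition. So your proposal has to be measured against the classical proofs. Your necessity direction follows the classical line but has a gap at the very first step: richness of $A$ gives subsequences along which \emph{shifted copies of $A$} converge $\Pc$-strongly, but it gives you nothing about the regularizer $B$; to extract a limit operator $B_h$ you would need $B$ itself to be rich, which is a nontrivial fact that you neither prove nor cite. The standard repair avoids limit operators of $B$ altogether: from $BA=I+K$ with $K\in\pc{\Xb}$ one gets $\|x\|\le\|B\|\,\|Ax\|+\|K(I-P_n)\|\,\|x\|$ for all $x$ supported outside $\{-n,\dots,n\}^N$, hence a lower bound of order $1/\|B\|$ for $A$ on far-out supports; this bound passes to every limit operator, and together with the dual estimate coming from $AB=I+K'$ it yields invertibility of all $A_h$ with $\sup_h\|A_h^{-1}\|\lesssim\|B\|$. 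That is where the uniform bound actually comes from.

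The sufficiency direction is where your proposal genuinely fails. The key step you postulate --- finitely many shifts $g_1,\dots,g_N$ and a partition $M_1+\dots+M_N=I-P_n$ such that on $\supp M_i$ the operator $A$ is within $\epsilon$ of a shifted copy of $A_{g_i}$ --- is false for general rich band-dominated operators, even under the hypothesis of a uniformly invertible operator spectrum. Take $\Xb=l^p(\Zb,\Cb)$ and $A=I+aI$ with $a_k=1$ if $k$ is a perfect square and $a_k=0$ otherwise. This $A$ is band-dominated and rich ($\dim X<\infty$), and since the gaps between consecutive squares grow, every limit operator of $A$ is either $I$ or $I$ plus a single diagonal entry; all are invertible with inverses of norm at most $1$. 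Yet any shifted limit operator has at most one diagonal entry equal to $2$, so if $\Zb\setminus\{-n,\dots,n\}$ were covered by $N$ sets on each of which $A$ agrees within $\epsilon<\tfrac12$ with a shifted limit operator, then $A$ could have at most $N$ diagonal entries equal to $2$ outside a finite cube --- contradicting the infinitude of squares. Hence the glued parametrix cannot even be assembled, and the Neumann-series endgame never starts. (Even where such local matching is available, you would additionally need uniform off-diagonal decay of the inverses $A_{g_i}^{-1}$, which are not local operators, to conclude $A\,(V_{-l_i}A_{g_i}^{-1}V_{l_i})M_i\approx M_i$.) The classical proofs of sufficiency proceed differently: from the uniform invertibility of $\sigma_{\op}(A)$ one derives, by a compactness/contradiction argument exploiting the band structure, a uniform lower bound for $A$ and for its adjoint (or pre-adjoint) on elements supported far from the origin --- i.e.\ invertibility at infinity --- and then uses the equivalence, for band-dominated operators, of invertibility at infinity with $\Pc$-Fredholmness. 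The constant $M=\sup_h\|A_h^{-1}\|$ enters in making the cut-off radius uniform, not in norm-controlling a glued parametrix.
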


\noindent
In \cite{Marko} one of the authors summarizes
\begin{center}
\begin{minipage}[b]{0.9\textwidth}
``It is the biggest question of the whole limit operator business whether or not in
Theorem~1, and consequently in all its applications, uniform invertibility of
$\sigma_{\op}(A)$ can be replaced by elementwise invertibility. In other words:

\medskip

\begin{minipage}[t]{0.05\textwidth}
\quad
\end{minipage}
\begin{minipage}[t]{0.2\textwidth}
\textbf{Big question:}
\end{minipage}
\begin{minipage}[t]{0.7\textwidth}
\textit{Is the operator spectrum of a rich operator automatically
uniformly invertible if it is elementwise invertible?}''

\medskip
\end{minipage}
\end{minipage}
\end{center}
This problem is on the spot since the very first versions of Theorem \ref{TUB} have been
proved, and in fact it has been the engine for far reaching developments. In 1985,
Lange and Rabinovich \cite{LangeR} studied a particular subalgebra of band-dominated
operators, the so called Wiener algebra, and initiated a series of works
\cite{RochFredTh, LimOps, Marko, LiChW, MarkoWiener} which show that for such
operators the question can be answered affirmatively. Another special class where
the problem is solved is the set of slowly oscillating band-dominated operators
which was treated by Rabinovich, Roch and Silbermann \cite{RochFredTh} using
localizing techniques \cite{RaRo}, which lead to a well-engineered theory on local
operator spectra.
A third class of operators for which the problem was solved in \cite{RochOnFS}
are the block diagonal operators formed by the sequence of so-called finite sections of another
band-dominated operator $B$.

Notice that the story of limit operators actually had begun much earlier
in the late 1920's in Favard's paper \cite{Favard} for studying ODEs with
almost-periodic coefficients, and in the work of Muhamadiev \cite{Muh1, Muh2, Muh3, Muh4}.
A review of this history is, for example, in the introduction of \cite{LiChW}.
A comprehensive presentation of these results, further achievements and applications
e.g. to convolution and pseudo-differential operators, as well as the required tools,
can be found in the 2004 book \cite{LimOps} of Rabinovich, Roch and Silbermann.

Starting with that book, \cite{LimOps}, the technical framework was extended in a way
that now also the extremal cases $l^\infty$ and $l^1$ \cite{LiSi, DissMarko, Marko, MarkoWiener}
as well as vector-valued $l^p$-spaces could be treated (see \cite{SeFre} for the current state of the art).
After the extension to $l^{\infty}$ and $l^{1}$ it turned out that in these two extremal cases the uniform
boundedness condition in Theorem \ref{TUB} was
redundant \cite{DissMarko, Marko}
(note that for $l^\infty$ this implicitly occurs
in \cite{RochFredTh} already). In $l^{\infty}$, results got even better when previous
techniques were combined with the generalized collectively compact operator theory of
Chandler-Wilde and Zhang \cite{ChWZh}. This led to a further simplification of the
Fredholm criteria to just requiring the injectivity of the limit operators
\cite{LiChWFavard, HabilMarko, LiChW}, which is known as Favard's condition.

Another natural playground for the limit operator method, which is not subject of the
present text, is the stability analysis for approximation methods applied to
band-dominated and related operators (cf. \cite{NonStrongly, RaRoSiAlgFSM, RaRoSiL2,
RochFredTh, LiEssBdd, HabilMarko, SeSi2, SeSi3, SeSi4, SeDis}, again the books
\cite{HaRoSi, Standard, LimOps, Marko, RochGross, LiChW} and the references cited there).

\medskip

Throughout all these works the ``big question'' has been ubiquitous e.g. in the
introductions of the books \cite{LimOps, Marko}, the open problems section of
\cite{LiChW}, the open questions in \cite[Section 6]{RochFredTh}, the discussion of
sufficient or weakly sufficient families of homomorphisms in \cite{Standard}, the
collection of main results in \cite[Section 2]{RochGross}, the papers
\cite{RochFredTh, RaRoSiAlgFSM, RaRoSiL2, RochOnFS} and many others.
In his review of the article \cite{RaRo}, A. B\"ottcher writes about the
uniform boundedness condition in Theorem \ref{TUB}: ``Condition (*) is nasty to work with.''
There is nothing to add to this.

\medskip

The aim of this paper is to solve the problem in general and to give an affirmative answer
to the big question (cf. Theorem \ref{CBig}) by showing that the ``nasty condition'' is redundant.
For this, we start with
recalling the required definitions and basic results in the next section.

In Section 3 the main results are proved. In short, the golden thread there is as
follows: We show that the lower norm of a band-dominated operator $A$ can be
approximated by a sort of restricted lower norms which are defined with respect to
elements of uniformly bounded support. Moreover, this approximation turns out to extend
uniformly to all operators in $\sigma_{\op}(A)$. Finally, for rich band-dominated $A$,
this permits to identify an element in the operator spectrum $\sigma_{\op}(A)$ which
realizes the infimum of the lower norms of all limit operators of $A$.
This removes the uniform boundedness condition from Theorem \ref{TUB} and allows to write
the $\Pc$-essential spectrum 
as in Corollary \ref{cor:ess.spec}.

The final Section 4 shines a light on the importance of the prerequisites \textit{rich}
and \textit{band-dominated} in the main results.

\section{\texorpdfstring{$\Pc$}{P}-Fredholmness and limit operators}

In all what follows we let $\Xb$ stand for a (generalized) sequence space $l^p(\Zb^N,X)$
with parameters $p\in\{0\}\cup[1,\infty]$, $N\in\Nb$ and a Banach space $X$.
These (generalized) sequences are of the form $x=(x_i)_{i\in\Zb^N}$ with
all $x_i\in X$. The spaces are equipped with the usual $p$-norm. In our notation,
$l^0(\Zb^N,X)$ stands for the closure in $l^\infty(\Zb^N,X)$ of the set of all
sequences $(x_i)_{i\in\Zb^N}$ with finite support.
Notice that this model in particular covers the spaces $L^p(\Rb^N)$ by a natural
isometric identification (see \eqref{eq:Lp-as-lp} below). For a subset
$F\subset\Zb^N$ we denote its characteristic function by $\chi_F$.

\subparagraph{Operators and convergence}

The following definitions and results are taken from e.g. \cite{LimOps, Marko, SeSi2, SeFre}.
Starting with the sequence $\Pc=(P_n)$ of the canonical projections
\[P_n:\Xb\to\Xb,\quad (x_i)_{i\in\Zb^N}\mapsto(\chi_{\{-n,\ldots,n\}^N}(i)\,x_i)_{i\in\Zb^N}\]
one says that a bounded linear operator $K$ on $\Xb$ is $\Pc$-compact if
\[\|(I-P_n)K\|+\|K(I-P_n)\|\to 0\quad\text{as}\quad n\to\infty.\]
The set of all $\Pc$-compact operators is denoted by $\pc{\Xb}$. Moreover, we
introduce the subset $\pb{\Xb}$ of $\lb{\Xb}$, the set of all bounded linear operators on
$\Xb$, as follows:
\[\pb{\Xb}:=\{A\in\lb{\Xb}:AK,KA\in\pc{\Xb}\text{ for all }K\in\pc{\Xb}\}.\]
One may say that those operators are compatible with $\pc{\Xb}$ and, indeed, $\pb{\Xb}$
is a closed subalgebra of $\lb{\Xb}$ and $\pc{\Xb}$ forms a closed two sided ideal in
$\pb{\Xb}$.

An operator $A\in\pb{\Xb}$ is said to be $\Pc$-Fredholm if there is an operator
$B\in\lb{\Xb}$ such that $AB-I$ and $BA-I$ are $\Pc$-compact. By \cite[Theorem 1.16]{SeSi3}
$B$ belongs to $\pb{\Xb}$ in this case, i.e. $\Pc$-Fredholmness of $A$ is equivalent to
the invertibility of $A+\pc{\Xb}$ in $\pb{\Xb}/\pc{\Xb}$.

Say that a bounded sequence $(A_n)\subset\pb{\Xb}$ converges $\Pc$-strongly to an
operator $A\in\lb{\Xb}$ if
\[\|P_m(A_n-A)\|+\|(A_n-A)P_m\|\to0\quad\text{as}\quad n\to\infty\]
for every $m\in\Nb$.
We shortly write $A_n\pto A$ in that case and note that $A\in\pb{\Xb}$
\begin{rem}
Notice that for all $p\in(1,\infty)$ and $\dim X<\infty$ these $\Pc$-notions
coincide with the classical ones: $\pc{\Xb}=\lc{\Xb}$ the set of compact operators,
$\pb{\Xb}=\lb{\Xb}$, an operator is $\Pc$-Fredholm if and only if it is Fredholm, and
a sequence $(A_n)$ converges $\Pc$-strongly to $A$ if and only if $A_n\to A$ and
$A_n^*\to A^*$ strongly.

The reason for the definition of the $\Pc$-notions is simply to extend the
concepts, tools and connections between the classical notions, which are well known
from standard Functional Analysis.
\end{rem}

\subparagraph{Band-dominated operators}
Every sequence $a=(a_n)\in l^\infty(\Zb^N,\lb{X})$ gives rise to an operator
$aI\in\lb{\Xb}$, a so-called multiplication operator, via the rule
$(ax)_i=a_ix_i$, $i\in\Zb^N.$
For every $\alpha\in\Zb^N$, we define the shift operator
$V_\alpha:\Xb\to\Xb$, $(x_i)\mapsto(x_{i-\alpha})$.

A band operator is a finite sum of the form $\sum a_\alpha V_\alpha$,
where $a_\alpha I$ are multiplication operators.
In terms of the generalized matrix-vector multiplication
\[
(a_{ij})_{i,j\in\Zb^N}\ (x_j)_{j\in\Zb^N}=(y_i)_{i\in\Zb^N}
\quad\textrm{with}\quad
y_i\ =\ \sum_{j\in\Zb^N}a_{ij}x_j,\ i\in\Zb^N,\quad\textrm{where\ } a_{ij}\in\lb{X},
\]
band operators $A$ act on $\Xb=l^p(\Zb^N,X)$ via multiplication by band matrices $(a_{ij})$,
that means $a_{ij}=0$ if $|i-j|$ exceeds the so-called band-width of $A$.

Typical examples are discretizations of differential operators on $\Rb^N$, such as discrete Schr\"o\-dinger operators.
(There is large interest in spectral properties of discrete Schr\"odinger and more general Jacobi operators.
Here are some references that involve limit-operator-type arguments:
\cite{CarmonaLacroix91,PasturFigotin,Davies,LastSimon06,Remling2,RaRo:disc.struc}.)

In many physical models however, interaction $a_{ij}$ between data at locations $i$ and $j$ decreases in a certain way
as $|i-j|\to\infty$ rather than suddenly stop at a prescribed distance of $i$ and $j$. This is one of the reasons for introducing the following notion:

An operator is called band-dominated if it is the limit, w.r.t. the
operator norm, of a sequence of band operators. Here typical examples are integral operators
\begin{equation} \label{eq:iop}
A:f\mapsto\int_{\Rb^{N}}k(\cdot,t)f(t)\,dt
\end{equation}
on $L^{p}(\Rb^{N})$, where the kernel function $k(s,t)$ decays sufficiently fast as $|s-t|\to\infty$.
Such operators are often connected with boundary integral problems (see e.g. \cite{Jorgens,AnseloneSloan,NumAna,Agarwal,ChWZh,ArensCWHas2,LimOps,LpInd,LiChWRough}).
Via the isometric isomorphism $L^{p}(\Rb^{N})\to l^{p}(\Zb^{N},L^{p}([0,1]^{N}))$ that sends $f$ to
\begin{equation}\label{eq:Lp-as-lp}
\textrm{the sequence}\ (f_i)_{i\in\Zb^N}\ \textrm{of restrictions}\ f_i:=f|_{i+[0,1]^N}\in L^{p}(i+[0,1]^{N})\cong L^{p}([0,1]^{N})=:X,
\end{equation}
the integral operator \eqref{eq:iop} corresponds to an infinite matrix $(a_{ij})_{i,j\in\Zb^N}$ with integral operator entries $a_{ij}:L^{p}(j+[0,1]^{N})\cong X\to L^{p}(i+[0,1]^{N})\cong X$.

We denote the class of all band-dominated operators on $\Xb=l^p(\Zb^N,X)$ by $\Alp$.
In contrast to the set of all band operators
(which is an algebra but not closed in $\lb{\Xb}$), the set $\Alp$ is a Banach algebra,
for which the inclusions $\pc{\Xb}\subset\Alp\subset\pb{\Xb}\subset\lb{\Xb}$ hold.

The first studies of particular subclasses of band-dominated operators and their Fredholm properties were for
the case of constant matrix diagonals, that is when the matrix entries $a_{ij}$ only depend on the difference $i-j$,
so that $A$ is a convolution
operator (a.k.a. Laurent or bi-infinite Toeplitz matrix, the stationary case) \cite{GoKre,Sim,GoFe,HaRoSi,Analysis}. Subsequently, the focus went to more general classes, such as convergent, periodic and almost periodic matrix diagonals, until at the current point arbitrary matrix diagonals can be studied -- as long as they are bounded. This possibility is due to the notion of limit operators that enables evaluation of the asymptotic behavior of an operator $A$ even for merely bounded diagonals in the matrix $(a_{ij})$.

\subparagraph{Limit operators}
Say that a sequence $h=(h_n)\subset \Zb^N$ tends to infinity if $|h_n|\to\infty$ as
$n\to\infty$.
If $h=(h_n)\subset \Zb^N$ tends to infinity and $A\in\pb{\Xb}$ then
\[A_h:=\plimn V_{-h_n}AV_{h_n},\]
if it exists, is called the limit operator of $A$ w.r.t. the sequence $h$.
The set
\[\sigma_{\op}(A):=\{A_h:h=(h_n)\subset\Zb^N\text{ tends to infinity}\}\]
of all limit operators of $A$ is called its operator spectrum.

We say that $A\in\pb{\Xb}$ has a rich operator spectrum (or we simply call $A$ a rich operator)
if every sequence $h\subset\Zb^N$ tending to infinity has a subsequence $g\subset h$ such that
the limit operator  $A_g$ of $A$ w.r.t. $g$ exists. The set of all rich operators is denoted
by $\pbr{\Xb}$ and the set of all rich band-dominated operators by $\Alpr$. Recall from
\cite[Corollary 2.1.17]{LimOps} or \cite[Corollary 3.24]{Marko} that $\Alpr=\Alp$ whenever $\dim X<\infty$.

\begin{prop}\label{PRichOps}
(\cite[Proposition 3.7]{DissMarko} or \cite[Corollary 1.2.7]{LimOps}; and \cite[Corollary 17]{SeFre})\\
Let again $\Xb=l^p(\Zb^N,X)$, where $p\in\{0\}\cup[1,\infty]$, $N\in\Nb$ and $X$ is a Banach space.
\begin{itemize}
\item For every $A\in\pbr{\Xb}$, $\sigma_{\op}(A)$ is sequentially compact
			with respect to $\Pc$-strong convergence.
\item The set $\pbr{\Xb}$ forms a closed subalgebra of $\pb{\Xb}$ and contains
			$\pc{\Xb}$ as a closed two-sided ideal.
\item If $p<\infty$ and $A\in\pbr{\Xb}$, then $A^*\in\Lc^\$(\Xb^*,\Pc)$ and
			\begin{equation}
			\sigma_{\op}(A^*)=(\sigma_{\op}(A))^*:=\{B^*:B\in\sigma_{\op}(A)\}.
			\end{equation}
\end{itemize}
\end{prop}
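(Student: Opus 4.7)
The three parts share a common toolkit: the shifts $V_\alpha$ are isometries, and $\Pc$-strong convergence of bounded sequences is tested on the countable family $(P_m)$, which permits diagonal extractions.

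For the first part, given $(B_k) \subset \sigma_{\op}(A)$ with $B_k = A_{h^{(k)}}$, I would pick for each $k$ an index $n_k$ with $|h^{(k)}_{n_k}| \geq k$ and
\[\|P_k(V_{-h^{(k)}_{n_k}} A V_{h^{(k)}_{n_k}} - B_k)\| + \|(V_{-h^{(k)}_{n_k}} A V_{h^{(k)}_{n_k}} - B_k) P_k\| < 1/k,\]
which is legitimate by the very definition of each $B_k$ as a $\Pc$-strong limit. Setting $g_k := h^{(k)}_{n_k}$ gives a sequence tending to infinity, and the richness of $A$ produces a subsequence $(g_{k_j})$ with $V_{-g_{k_j}} A V_{g_{k_j}} \pto C \in \sigma_{\op}(A)$. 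A triangle estimate at each fixed level $P_m$ (exploiting $\|P_m X\| \leq \|P_{k_j} X\|$ whenever $k_j \geq m$) then forces $B_{k_j} \pto C$, giving sequential compactness.

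For the second part, closure of $\pbr{\Xb}$ under sums and products rests on double subsequence extraction: for rich $A_1,A_2$ and $h$ tending to infinity, extract a subsequence along which $A_1$ has a limit operator, then a sub-subsequence for $A_2$; since $\Pc$-strong limits are linear and, on norm-bounded sequences, multiplicative, $(A_1+A_2)_g$ and $(A_1A_2)_g$ exist and equal the expected expressions. Norm closedness follows by diagonalisation: for a norm-Cauchy $(A_n)\to A$ with each $A_n$ rich, nested extractions produce a single $g$ along which every $(A_n)_g$ exists; the inequality $\|(A_n)_g - (A_m)_g\| \leq \|A_n - A_m\|$ (lower semicontinuity of the norm under $\Pc$-strong limits of uniformly bounded sequences) makes these norm-Cauchy with limit $B$, and a three-epsilon estimate yields $V_{-g_k} A V_{g_k} \pto B$. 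Finally, for $K\in\pc{\Xb}$ and $h$ tending to infinity, observing that $V_{h_n} P_m V_{-h_n}$ is the projection onto the translated window $h_n + \{-m,\ldots,m\}^N$, one gets $\|P_m V_{-h_n} K V_{h_n}\| \leq \|(I-P_M)K\|$ and $\|V_{-h_n} K V_{h_n} P_m\| \leq \|K(I-P_M)\|$ as soon as $|h_n|>m+M$; letting $M\to\infty$ with $n$ forces $V_{-h_n} K V_{h_n} \pto 0$. Hence $\pc{\Xb}\subset\pbr{\Xb}$, and the ideal property descends from $\pb{\Xb}$.

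For the third part, for $p<\infty$ the identification $\Xb^*\cong l^q(\Zb^N,X^*)$ gives $V_\alpha^* = V_{-\alpha}$ and $P_m^*=P_m$, so $\Pc$-strong convergence of uniformly bounded sequences is preserved under taking adjoints. Thus $V_{-g_k} A V_{g_k} \pto A_g$ entails $V_{-g_k} A^* V_{g_k} = (V_{-g_k} A V_{g_k})^* \pto (A_g)^*$, which proves both the richness of $A^*$ and the equality $\sigma_{\op}(A^*)=\sigma_{\op}(A)^*$ simultaneously. The main technical obstacle is the diagonalisation in (i) and its variant in the norm-closure half of (ii): one must arrange that the extracted diagonal $(g_k)$ still tends to infinity — so that the richness hypothesis on $A$ itself (rather than on already-taken limits) can be invoked — while ensuring that the originally given $B_k$'s inherit the $\Pc$-strong limit of their shifted approximants; the $1/k$-accuracy at level $P_k$ and the nested subsequence extraction are precisely engineered to accomplish this.
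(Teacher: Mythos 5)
Your argument is correct, and since the paper does not prove Proposition \ref{PRichOps} itself but cites it (to \cite[Proposition 3.7]{DissMarko}, \cite[Corollary 1.2.7]{LimOps}, \cite[Corollary 17]{SeFre}), the relevant comparison is with those sources, whose proofs follow essentially the same route you take: the $1/k$-accurate diagonal extraction at level $P_k$ with $|g_k|\to\infty$ for sequential compactness, nested subsequence extraction plus the norm bound $\|C_g\|\le\|C\|$ for the algebra and closedness claims, the shifted-window estimate for $\pc{\Xb}$, and $P_m^*=P_m$, $V_\alpha^*=V_{-\alpha}$ under $\Xb^*\cong l^q(\Zb^N,X^*)$ for the duality statement. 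The only point worth stating explicitly is the inclusion $\sigma_{\op}(A^*)\subset(\sigma_{\op}(A))^*$: given a limit operator of $A^*$ along $h$, use richness of $A$ to pass to a subsequence $g\subset h$ on which $A_g$ exists and note that the limit along $h$ coincides with the one along $g$, hence equals $(A_g)^*$.
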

\noindent
Note that Proposition \ref{PRichOps} translates to $\Alpr$ as well and that
$\sigma_{\op}(A)\subset\Alp$ if $A\in\Alp$.
Finally recall from the introduction that, for operators in $\Alpr$, Theorem \ref{TUB} is in force
and provides a characterization of their $\Pc$-Fredholm property.

\section{The main results}
\begin{defn} \label{def:nu}
Given two Banach spaces $Y, Z$ the lower norm of an operator $A\in\lb{Y,Z}$ is
\[\nu(A):=\inf\{\|Ay\|:y\in Y, \|y\|=1\}.\]
For operators $A\in\lb{\Xb}$ and subsets $F\subset\Zb^N$ we
shortly write $\nu(A|_F)$ for the lower norm of the restricted operator
$A|_F:=A\chi_FI:\im \chi_FI\to\Xb$.
\end{defn}

If $A$ is invertible then $\nu(A)=1/\|A^{-1}\|$. So our big question translates:
If every $B\in\sigma_{\op}(A)$ is invertible, are then the lower norms $\nu(B)$
uniformly bounded away from zero? In short:
\begin{equation*} \tag{BQ} \label{eq:BQ1}
(\forall B\in \sigma_{\op}(A):\ B\textrm{ is invertible})\quad\stackrel{?}
\Longrightarrow\quad\inf\{\nu(B):B\in\sigma_{\op}(A)\}>0
\end{equation*}
We prove that the answer is `yes'. Our approach uses the sequential compactness
property of $\sigma_{\op}(A)$ as mentioned in Proposition \ref{PRichOps}. This
property motivates the following strategy of proof: Suppose all $B\in\sigma_{\op}(A)$
were invertible but there was a sequence $B_1, B_2, ... \in\sigma_{\op}(A)$ with
$\nu(B_n)\to 0$. By our compactness, there is a subsequence $(B_{n_k})$ with limit
$B\in\sigma_{\op}(A)$. Can $B$ be invertible?

In case $\|B_{n_k}-B\|\to 0$ one can conclude $\nu(B_{n_k})\to\nu(B)$ and hence $\nu(B)=0$,
which contradicts the invertibility of $B$ (proof finished).
Unfortunately, all we have, by Proposition \ref{PRichOps}, is $B_{n_k}\pto B$,
which does not imply $\nu(B_{n_k})\to\nu(B)$. As an example, look at
$B_n:=P_n+\frac 1n(I-P_n)\pto I=:B$, where the ``region'',
$\Zb^N\setminus\{-n,...,n\}^N$, that is responsible for the small values of $\nu(B_n)=\frac 1n$,
moves away from the origin as $n\to\infty$ whence it has no impact on $\plim B_n$ and its
lower norm.

To get back on track with our proof, we will
\begin{itemize}
\item pass to translates $V_{-j}B_{n_k}V_j$ of $B_{n_k}$ such that the ``bad region'' of
$B_{n_k}$ shifts to the origin and hence contributes to the lower norm of the $\Pc$-limit $B$.
(Note that obviously $V_{-j}BV_j\in\sigma_{\op}(A)$ if $B\in\sigma_{\op}(A)$ and $j\in\Zb^N$.)
\item study the lower norms $\nu(B|_F)$ from Definition \ref{def:nu}, in particular
for bounded sets $F\subset\Zb^{N}$. (Note that $B_{n_k}\pto B$ implies $\|(B_{n_k}-B)\chi_FI\|\to 0$ and hence $\nu(B_{n_k}|_F)\to\nu(B|_F)\ge\nu(B)$ for all bounded sets $F\subset\Zb^N$.)
\item in order to find appropriate shifts $V_j$ and bounded sets $F$, study the lower norms of operators,
restricted to elements $x\in\Xb$ with support in any set of a prescribed diameter $D$ (i.e. in an arbitrary translate of the cube $[-\frac D2,\frac D2]^{N}$) -- see Definition \ref{def:nuD} below.
\end{itemize}

\begin{defn}\label{def:nuD}
The support of a sequence $x=(x_n)\in \Xb$ is the set
$\supp x :=\{n\in\Zb^N:x_n\neq 0\}$. The diameter of a subset $U\subset \Zb^N$
is defined as $\diam U := \sup\{|u-v|_\infty:u,v\in U\}$,
where $|\cdot|_\infty$ denotes the maximum norm on $\Rb^N$.
Moreover, put $\dist(U,V):=\min\{|u-v|_\infty:u\in U, v\in V\}$ for two nonempty
sets $U,V\subset\Zb^N$.
For $A\in\lb{\Xb}$, $F\subset\Zb^N$ and $D\in\Nb$, we now put
\[\nu_D(A|_F):=\inf\{\|A|_Fx\|:x\in \im\chi_FI, \|x\|=1, \diam\supp x \leq D\}.\]
\end{defn}

We first show that $\nu_D(A|_F)\to\nu(A|_F)$ as $D\to\infty$, uniformly in a surprisingly general sense.
We say $A$ has band-width less than $w\in\Nb$
if $\chi_U A\chi_V I=0$ for all $U,V\subset\Zb^N$ with $\dist(U,V)>w$.

\begin{prop} \label{prop:5}
Let $\delta>0$, $r>0$ and $w\in\Nb$. Then there is a $D\in\Nb$
such that for all band operators $A$ with $\|A\|<r$ and band-width less than $w$
\[\nu(A|_F)+\delta\geq\nu_D(A|_F)\geq\nu(A|_F)
	\quad\text{for all}\quad F\subset\Zb^N.\]
\end{prop}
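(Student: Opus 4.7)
The right inequality $\nu(A|_F) \leq \nu_D(A|_F)$ is immediate since $\nu_D$ is an infimum over a smaller test set. For the other direction my plan is to prove the localized statement: for any $x \in \im \chi_F I$ with $\|x\| = 1$ there exists $y \in \im \chi_F I$ with $\|y\| = 1$, $\diam \supp y \leq D$, and $\|Ay\| \leq \|Ax\| + \delta$, where $D$ depends only on $\delta, r, w$ (and the fixed parameters $p, N$). Taking the infimum over $x$ then yields $\nu_D(A|_F) \leq \nu(A|_F) + \delta$.

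For $p \in [1,\infty)$ I would use a sharp tiling. Cover $\Zb^N$ by cubes $Q_k^\tau$ of side $D$, shifted by some $\tau \in \{0,\ldots,D-1\}^N$, and decompose $x = \sum_k x_k$ with $x_k := \chi_{Q_k^\tau} x$. Since $A$ has band-width less than $w$, on the interior $I_k^\tau$ of each $Q_k^\tau$ (points at $|\cdot|_\infty$-distance $>w$ from the boundary) we have $\chi_{I_k^\tau} A x_k = \chi_{I_k^\tau} A x$, while the ``boundary'' part of $A x_k$ depends only on the values of $x$ in the inner $2w$-strip $S_k^\tau$ of $Q_k^\tau$, so has norm at most $r\|\chi_{S_k^\tau} x\|$. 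A shift-averaging argument over $\tau$ finds a shift with $\sum_k \|\chi_{S_k^\tau} x\|^p \leq (4Nw/D)\|x\|^p$; combined with $\sum_k \|x_k\|^p = 1$ this yields $\sum_k \|Ax_k\|^p \leq \|Ax\|^p + r^p \cdot 4Nw/D$. A weighted-average inequality then picks out some $k$ with $\|Ax_k\|/\|x_k\| \leq \|Ax\| + r(4Nw/D)^{1/p}$, and I choose $D$ large enough to absorb the error into $\delta$. Then $y := x_k/\|x_k\|$ works.

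For $p \in \{0,\infty\}$ the sharp cutoff strategy breaks: abruptly truncating $x$ at the boundary of a cube introduces a new contribution to $\|Ay\|_\infty$ of order $\|A\|$. Instead I would use a Lipschitz tent function. Pick $i_0$ with $\|x_{i_0}\|_X$ within $\eta$ of $\|x\|_\infty = 1$ (attained in $l^0$, approached to any $\eta$ in $l^\infty$), take $\phi : \Zb^N \to [0,1]$ equal to $1$ at $i_0$, supported in the cube of side $D$ around $i_0$, with slope $L \sim 1/D$, and set $y$ proportional to $\phi x$. Writing $A(\phi x) = \phi \cdot A x + [A,\phi I] x$ and using the matrix representation $([A,\phi I])_{ij} = a_{ij}(\phi_j - \phi_i)$, the band-width bound $|\phi_j - \phi_i| \leq Lw$ on the nonzero entries, and the uniform entry bound $\|a_{ij}\| \leq \|A\| < r$, one obtains $\|[A,\phi I]\|_\infty \leq Lw(2w+1)^N r$. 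For $L$ small this is $<\delta/2$, and picking $\eta$ small absorbs the denominator $1/\|\phi x\|_\infty \leq 1/(1-\eta)$ into the remaining $\delta/2$.

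The main obstacle I expect is uniformity: the chosen $D$ must work against all band operators $A$ with $\|A\| < r$ and band-width $<w$, and against all $F \subset \Zb^N$, independently of any further structure of $A$. This comes out essentially for free from the two quantitative features used above, (i) the band-width-$w$ identity ``$Ax$ restricted to the interior of a cube sees only $x$ on that cube'' and (ii) the uniform entry bound $\|a_{ij}\| \leq r$ for any band operator of norm less than $r$. Together they produce a $D = D(\delta, r, w, N, p)$ that does the job for every such $A$ and every $F$ simultaneously.
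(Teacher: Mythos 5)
Your proposal follows essentially the same route as the paper: for $p\in[1,\infty)$ it is in substance the paper's alternative proof (cover by cubes, use the band-width to localize $Ax$ to cube interiors, control the boundary-strip contribution by $r$ times the strip mass, then average over positions/shifts and pigeonhole to find a good cube), and for $p\in\{0,\infty\}$ it is the paper's Lipschitz tent-function commutator argument. The one caveat is the intermediate display $\sum_k\|Ax_k\|^p\le\|Ax\|^p+r^p\,4Nw/D$, which for $p>1$ does not follow directly from the triangle inequality because of cross terms; the correct form is the Minkowski-type bound $\sum_k\|Ax_k\|^p\le\bigl(\|Ax\|+r\,(4Nw/D)^{1/p}\bigr)^p$ (the paper handles exactly this point with a convexity inequality for $(a+b)^p$), and since your pigeonhole conclusion $\|Ax_k\|/\|x_k\|\le\|Ax\|+r\,(4Nw/D)^{1/p}$ is precisely what that corrected bound yields, the argument stands.
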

\begin{proof}
Clearly $\nu_D(\cdot)\geq\nu(\cdot)$ holds.
Let $R:=4w$ and for every function $\varphi$ on $\Rb^N$ and every
$\alpha\in\Zb^N$ we set $\varphi^{\alpha,R}(s):=\varphi\left(s/R-\alpha\right)$.
We start with the case $N=1$ and $p\in[1,\infty)$. We define
$\chi:=\chi_{[-\frac{1}{2},\frac{1}{2})}$, and for $x\in \im\chi_FI$
and $m\in\Nb$ we consider the decomposition
\begin{align*}
x = \sum_{l=0}^{m-1}\sum_{n\in m\Zb}\chi^{l+n,R}x
 &= \sum_{j=1}^{m-1}\sum_{n\in m\Zb}\chi^{j+l_0+n,R}x
			+\sum_{n\in m\Zb}\chi^{l_0+n,R}x
\end{align*}
where  $l_0\in\{0,\ldots,m-1\}$ is chosen in such a way, that the norm of
$y:=\sum_{n\in m\Zb}\chi^{l_0+n,R}x$ is minimal. Then, in particular
$\|y\|\leq m^{-1/p}\|x\|$ and
\[\|Ax\|
			\geq  \left\|\sum_{j=1}^{m-1}\sum_{n\in m\Zb}A\chi^{j+l_0+n,R}x\right\|
			-\|A\|\|y\|
			\geq \left\|\sum_{n\in m\Zb}\varphi_n\sum_{j=1}^{m-1}A\chi^{j+l_0+n,R}x\right\|
            -r\|y\|\]
where $\varphi_n:=\sum_{k=0}^{m-1}\chi^{k+l_0+1/2+n,R}$ can be inserted since the
band-width of $A$ is less than $w=R/4$.
Further
\begin{align*}
&\left\|\sum_{n\in m\Zb}\varphi_n\sum_{j=1}^{m-1}A\chi^{j+l_0+n,R}x\right\|^p
	= \sum_{n\in m\Zb}\left\|\varphi_n\sum_{j=1}^{m-1}A\chi^{j+l_0+n,R}x\right\|^p
	= \sum_{n\in m\Zb}\left\|\sum_{j=1}^{m-1}A\chi^{j+l_0+n,R}x\right\|^p\\
	&\quad\quad\quad\quad\quad\quad
	\geq \sum_{n\in m\Zb}\nu_{mR}(A|_F)^p\left\|\sum_{j=1}^{m-1}\chi^{j+l_0+n,R}x\right\|^p
	= \nu_{mR}(A|_F)^p\left\|\sum_{n\in m\Zb}\sum_{j=1}^{m-1}\chi^{j+l_0+n,R}x\right\|^p\\
	&\quad\quad\quad\quad\quad\quad\geq \nu_{mR}(A|_F)^p(\|x\| - \|y\|)^p.
\end{align*}
Combining these estimates we arrive at
\begin{align*}
\|Ax\|
&\geq \nu_{mR}(A|_F)(\|x\| - \|y\|)-r\|y\|
\geq \nu_{mR}(A|_F)\|x\| - 2rm^{-1/p}\|x\|.
\end{align*}
Taking the infimum over all $x\in\im\chi_FI$, $\|x\|=1$ this yields
$\nu(A|_F)\geq\nu_{mR}(A|_F)-2rm^{-1/p},$
hence the assertion with $m>(2r\delta^{-1})^p$ and $D:=mR=4mw$.

If $N>1$, then we define functions
\[\chi_1:=\chi_{[-\frac{1}{2},\frac{1}{2})\times\Rb^{N-1}},\quad
  \chi_2:=\chi_{\Rb\times [-\frac{1}{2},\frac{1}{2})\times\Rb^{N-2}},\quad \ldots,\quad
	\chi_N:=\chi_{\Rb^{N-1}\times [-\frac{1}{2},\frac{1}{2})}.\]
For $x\in\chi_FI$, applying the above estimates $N$ times we get
\begin{align*}
\|Ax\|&\geq  \left\|\sum_{j_1=1}^{m-1}\sum_{n_1\in m\Zb}A\chi_1^{j_1+l_1^0+n_1,R}x\right\|
			-rm^{-1/p}\|x\|\\
	&\geq\ldots\\		
	&\geq  \left\|\sum_{(j_1,\ldots,j_N)\in\{1,\ldots,m-1\}^N}\sum_{(n_1,\ldots,n_N)\in (m\Zb)^N}
		A\chi_1^{j_1+l_1^0+n_1,R}\cdots\chi_N^{j_N+l_N^0+n_N,R}x\right\|
			-Nrm^{-1/p}\|x\|
\end{align*}
and with $\varphi_{(n_1,\ldots,n_N)}:=\sum_{(k_1,\ldots,k_N)\in\{0,\ldots,m-1\}^N}
	\chi_1^{k_1+l_1^0+1/2+n_1,R}\cdots\chi_N^{k_N+l_N^0+1/2+n_N,R}$
the first summand again turns out to be 
\begin{align*}
&\quad \geq		
\nu_{mR}(A|_F)^p\left\|\sum_{(j_1,\ldots,j_N)\in\{1,\ldots,m-1\}^N}\sum_{(n_1,\ldots,n_N)\in (m\Zb)^N}
		\chi_1^{j_1+l_1^0+n_1,R}\cdots\chi_N^{j_N+l_N^0+n_N,R}x\right\|^p,
\end{align*}
hence the assertion again follows by
\[\|Ax\|\geq \nu_{mR}(A|_F)\|x\| - 2Nrm^{-1/p}\|x\|.\]

Finally the case $p\in\{0,\infty\}$: This time put $\chi:=\chi_{[-1,1]^N}$ and $\psi(t):=\max(1-|t|_\infty,0)$ for $t\in\Rb^N$. For $\alpha,\beta\in\Zb^N$ and $k\in\Nb$ we note that
\[
\chi^{\beta/w,w}\psi^{\alpha,k}=\psi^{\alpha,k}(\beta)\chi^{\beta/w,w}+\varphi\quad\textrm{with}\quad
\|\varphi\|_\infty=\sup\{|\varphi(t)|:|t-\beta|_\infty\le w\}\le \frac wk
\]
and $\chi_{\{\beta\}}A=\chi_{\{\beta\}}A\chi^{\beta/w,w}$ since $A$ has band-width less than $w$. We conclude, for all $x\in\Xb$,
\begin{align*}
\|\chi_{\{\beta\}}A\psi^{\alpha,k}x\|
&= \|\chi_{\{\beta\}}A\chi^{\beta/w,w}\psi^{\alpha,k}x\|
 \le \psi^{\alpha,k}(\beta)\|\chi_{\{\beta\}}A\chi^{\beta/w,w}x\|+\|\chi_{\{\beta\}}A\varphi x\|\\
&\le 1\cdot\|\chi_{\{\beta\}}Ax\|+\frac wk \|A\|\|x\| \le \|Ax\|+\frac {wr}k \|x\|
\end{align*}
since $\|A\|<r$. Taking the supremum over all $\beta\in\Zb^N$ yields $\|A\psi^{\alpha,k}x\|\le \|Ax\|+\frac{wr}k \|x\|$, and hence,
for all $x\in\im\chi_F I$ with $\|x\|=1$: $\nu_{2k}(A|_F)\|\psi^{\alpha,k}x\|\le \|Ax\|+\frac{wr}k$. Now taking the supremum over all $\alpha\in\Zb^N$ yields $\nu_{2k}(A|_F)\le \|Ax\|+\frac{wr}k$. Finally take the infimum over all $x\in\im\chi_F I$ with $\|x\|=1$ to get $\nu_{2k}(A|_F)\le \nu(A|_F)+\frac{wr}k$ and make $k$ sufficiently large for $\frac{wr}k<\delta$.
\end{proof}

This result is going to be a main ingredient of our plot. The basic idea in the proof
presented above for Proposition \ref{prop:5}, namely to split the domain of a band
operator $A$ into separated regions and to exploit that the action
of $A$ is limited by its band-width, has a long and successful tradition
and has lead to deep results on inverse closedness and on the existence of
$\Pc$-regularizers in $\pb{\Xb}$ or $\Alp$ (see \cite{KozakSimo, Analysis, NumAna,
LimOps, SeDis, SeSi3}). We also point out that the proof reveals explicit formulas
for the choice of $D$ depending on $\delta$, $r$ and $w$.
In addition, we would like to show an alternative proof
for the case $p\in [1,\infty)$ that is based on an idea of \cite{CW.Heng.ML:UpperBounds},
where it is used to bound and approximate the pseudo-spectrum of $A$ by a union of
pseudo-spectra of (moderately sized) finite sections of $A$.
\begin{proof}
Let $A$ be a band operator of the norm $\|A\|<r$ and its band-width less than $w\in\Nb$.

For $n\in\Nb$ and $k\in\Zb^N$, put $C_n:=\{-n,...,n\}^N$, $C_{n,k}:=k+C_n$, $D_n:=C_{n+w}\setminus C_{n-w}$, $D_{n,k}:=k+D_n$,
$c_n:=|C_n|=|C_{n,k}|=(2n+1)^N$ and $d_n:=|D_n|=|D_{n,k}|=c_{n+w}-c_{n-w}\sim n^{N-1}$.
Abbreviate $\chi_{C_{n,k}}I=:P_{n,k}$ and $\chi_{D_{n,k}}I=:\Delta_{n,k}$, and note the following facts:
\begin{itemize}
\item[(a)] For all finite sets $S\subset\Zb^N$ and all $x\in\Xb$, it holds
$
\displaystyle\sum_{k\in\Zb^N}\|\chi_{k+S}\,x\|^p = |S|\cdot \|x\|^p.
$
\item[(b)] For the commutator $[P_{n,k},A]:=P_{n,k}A-AP_{n,k}$, one has $[P_{n,k},A]=[P_{n,k},A]\Delta_{n,k}$, so that for all $x\in\Xb$,
$
\|[P_{n,k},A]x\| = \|[P_{n,k},A]\Delta_{n,k}x\| \le \|[P_{n,k},A]\| \|\Delta_{n,k}x\|  < 2r \|\Delta_{n,k}x\|
$
and hence
\[
\sum_{k\in\Zb^N} \|[P_{n,k},A]x\|^p \le \sum_{k\in\Zb^N} 2^pr^p \|\Delta_{n,k}x\|^p \stackrel{\textrm{(a)}}{=} 2^p r^p d_n \|x\|^p.
\]
\item[(c)] For all $a,b,\varphi>0$, one has
$
(a+b)^p \le (1+\varphi)^{p-1}a^p+(1+\varphi^{-1})^{p-1}b^p,
$
with equality iff $\varphi=\frac ba$. (This is a simple calculus exercise: minimize the right-hand side as a function of $\varphi$.)
\end{itemize}
Now, to our arbitrary $\delta>0$, choose $n_0\in\Nb$ large enough that $\frac{d_n}{c_n}<(\frac \delta {4r})^p$ for all $n\ge n_0$. Moreover, given an arbitrary $F\subset\Zb^{N}$, fix $x\in\im\chi_{F}I$ such that $\|Ax\|<(\nu(A|_F)+\frac \delta 2)\|x\|$.
Then we conclude as follows:
\begin{align*}
\sum_{k\in\Zb^N} \|AP_{n,k}x\|^p
&\stackrel{\textrm{\makebox[15pt]{}}}{\le} \sum_{k\in\Zb^N} \left(\|P_{n,k}Ax\| + \|[P_{n,k},A]x\|\right)^p\\
&\stackrel{\textrm{\makebox[15pt]{(c)}}}{\le} (1+\varphi)^{p-1} \sum_{k\in\Zb^N} \|P_{n,k}Ax\|^p
 + (1+\varphi^{-1})^{p-1} \sum_{k\in\Zb^N} \|[P_{n,k},A]x\|^p,\quad \forall\varphi>0\\
&\stackrel{\textrm{\makebox[15pt]{(a),(b)}}}{\le} (1+\varphi)^{p-1} c_n\|Ax\|^p
 + (1+\varphi^{-1})^{p-1} 2^pr^p d_n \|x\|^p,\qquad \forall\varphi>0.
\end{align*}
Minimizing the latter term over all $\varphi>0$ we get by the fact $\textrm{(c)}$ that
\begin{align*}
\sum_{k\in\Zb^N} \|AP_{n,k}x\|^p
&\le \left(c_n^{1/p}\,\|Ax\| + d_n^{1/p}\,2\,r\,\|x\|\right)^p \\ 
&\le \Big(\nu(A|_F) + \frac\delta 2 + \underbrace{\left(\frac{d_n}{c_n}\right)^{1/p}2r}_{<\frac\delta2\ {\rm if}\ n\ge n_0}\Big)^p\underbrace{c_n\|x\|^p}_{{\rm use\ (a)}}
< (\nu(A|_{F}) + \delta)^p \sum_{k\in\Zb^N} \|P_{n,k}x\|^p
\end{align*}
for all $n\ge n_0$. The last inequality shows that there must be some $k\in\Zb^N$ for which
\[
\|AP_{n,k}x\|^p < (\nu(A|_{F}) + \delta)^p\|P_{n,k}x\|^p,
\quad\textrm{and hence}\quad \nu_{2n}(A|_{F}) \le \frac{\|AP_{n,k}x\|}{\|P_{n,k}x\|} < \nu(A|_{F})+\delta
\]
for all $n\ge n_0$ since $x\in\im\chi_{F}I$. This finishes the proof.
\end{proof}

Next we observe that the previous result extends to band-dominated operators
and, actually, even to their operator spectra in a uniform manner.
\begin{cor}\label{CLower}
Let $A\in\Alp$ and $\delta>0$. Then there is a $D\in\Nb$ such that
\[\nu(B|_F)+\delta\geq\nu_{\tilde{D}}(B|_F)\geq\nu(B|_F)
	\quad\text{for all}\quad \tilde{D}\geq D,\quad F\subset\Zb^N \quad\text{and}
	\quad B\in\{A\}\cup\sigma_{\op}(A).\]
\end{cor}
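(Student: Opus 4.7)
The plan is to reduce the statement to Proposition~\ref{prop:5} by approximating $A$ in operator norm by a band operator and by exploiting the $\|\cdot\|$-Lipschitz continuity of $\nu(\,\cdot\,|_F)$ and $\nu_{\tilde D}(\,\cdot\,|_F)$ (immediate from the definitions: changing the operator by $E$ changes both lower norms by at most $\|E\|$). Fix $\varepsilon:=\delta/3$ and $r>\|A\|$, and choose a band operator $A'$ with $\|A-A'\|<\varepsilon$ and band-width less than some $w\in\Nb$; then $\|A'\|<r$. Proposition~\ref{prop:5} applied with parameters $\varepsilon,r,w$ yields a $D\in\Nb$ such that $\nu_{\tilde D}(C|_F)\leq\nu(C|_F)+\varepsilon$ for every band operator $C$ of band-width less than $w$ with $\|C\|<r$, every $F\subset\Zb^N$, and every $\tilde D\geq D$ (the uniformity in $\tilde D$ coming from the monotonicity of $\nu_{\tilde D}$ in $\tilde D$).

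For every $B\in\{A\}\cup\sigma_{\op}(A)$, I aim to produce a band operator $B'$ of band-width less than $w$ with $\|B'\|<r$ and $\|B-B'\|\leq\varepsilon$. The Lipschitz estimates then yield
\[
\nu_{\tilde D}(B|_F)\leq\nu_{\tilde D}(B'|_F)+\varepsilon\leq\nu(B'|_F)+2\varepsilon\leq\nu(B|_F)+3\varepsilon=\nu(B|_F)+\delta,
\]
while $\nu_{\tilde D}(B|_F)\geq\nu(B|_F)$ is trivial. For $B=A$ take $B'=A'$. For $B=\plim_n V_{-h_n}AV_{h_n}\in\sigma_{\op}(A)$, the natural candidate is $B':=\plim_k V_{-h_{n_k}}A'V_{h_{n_k}}$ along a suitable subsequence: each $C_n:=V_{-h_n}A'V_{h_n}$ has band-width less than $w$ (a property inherited by any $\Pc$-strong limit, via convergence of matrix entries on every finite block) and norm $\|A'\|$, and lower semi-continuity of the operator norm under $\Pc$-strong convergence applied to $V_{-h_{n_k}}(A-A')V_{h_{n_k}}\pto B-B'$ gives $\|B-B'\|\leq\|A-A'\|<\varepsilon$.

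The main obstacle is ensuring that $(C_n)$ admits a $\Pc$-strongly convergent subsequence: for $\dim X<\infty$ this is immediate from $A'\in\Alp=\Alpr$, but for a general Banach space $X$ the band operator $A'$ need not be rich. To cover that case I would bypass the explicit construction of $B'$ and instead rerun the proof of Proposition~\ref{prop:5} itself for operators obeying only the approximate band-width condition $\|\chi_UB\chi_VI\|\leq\varepsilon$ whenever $\dist(U,V)>w$. This condition holds uniformly for every $B\in\{A\}\cup\sigma_{\op}(A)$: for $A$ itself from $\chi_UA'\chi_VI=0$ together with $\|A-A'\|<\varepsilon$, and for a limit operator $B=\plim V_{-h_n}AV_{h_n}$ by translation invariance along $h_n$ and lower semi-continuity of the norm under $\Pc$-strong convergence. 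The band-width is used in the proof of Proposition~\ref{prop:5} only to guarantee $\varphi_nA\chi^{\alpha,R}x=A\chi^{\alpha,R}x$ for the right indices and $=0$ otherwise; under the weaker condition these identities hold up to errors of norm $\leq\varepsilon\|\chi^{\alpha,R}x\|$, and accumulating them via the $L^p$-orthogonality of the localized pieces yields an additive $C\varepsilon\|x\|$ term in the final estimate (with an analogous adjustment in the $p\in\{0,\infty\}$ half of the proof). Choosing $\varepsilon$ and the internal parameter $m$ small enough that both the $m$-dependent and the $\varepsilon$-dependent error contributions lie below $\delta/2$ produces the desired uniform $D$.
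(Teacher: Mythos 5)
Your treatment of $B=A$ (approximate by a band operator $A'$, use that $\nu(\cdot|_F)$ and $\nu_{\tilde D}(\cdot|_F)$ are $1$-Lipschitz in the operator norm, apply Proposition \ref{prop:5}, and use monotonicity in $\tilde D$) is exactly the paper's first step and is fine. The problem is the extension to $\sigma_{\op}(A)$. You correctly identify that your first route collapses for infinite-dimensional $X$, since the band operator $A'$ need not be rich, so the $\Pc$-convergent subsequence of $V_{-h_n}A'V_{h_n}$ may not exist; everything therefore rests on your fallback, and that is where there is a genuine gap. The claim that the proof of Proposition \ref{prop:5} survives when exact band-width is weakened to $\|\chi_U B\chi_V I\|\le\varepsilon$ for $\dist(U,V)>w$, at the price of an additive $C\varepsilon\|x\|$, is not justified and, as sketched, fails for $p\in[1,\infty)$. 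In that proof the exact identities $\varphi_n A\chi^{j+l_0+n',R}x=A\chi^{j+l_0+n',R}x$ (for $n'=n$) and $=0$ (for $n'\ne n$) are invoked for \emph{every} localized piece of $x$. Under the weakened hypothesis each piece contributes an error of size $\varepsilon\|\chi^{j+l_0+n',R}x\|$, and for fixed $n$ all these error terms are supported inside $\supp\varphi_n$, so there is no $L^p$-orthogonality between them; they can only be added by the triangle inequality, giving $\varepsilon\sum_{j,n'}\|\chi^{j+l_0+n',R}x\|$, an $\ell^1$-sum of localized norms which is of order $\varepsilon M^{1-1/p}\|x\|$ when $x$ is spread over $M$ windows, not $O(\varepsilon\|x\|)$. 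Grouping the far pieces into one application of the hypothesis, $\|\varphi_n A(x-y-u_n)\|\le\varepsilon\|x\|$, does not rescue the argument: then each of the arbitrarily many summands in the subsequent $\ell^p$-sum over $n$ loses a full $\varepsilon\|x\|$, which ruins the lower bound as soon as the individual $\|u_n\|$ are much smaller than $\|x\|$. The alternative proof suffers the same defect: fact (b) becomes $\|[P_{n,k},A]x\|\le 2r\|\Delta_{n,k}x\|+2\varepsilon\|x\|$, and the additive term summed over all $k\in\Zb^N$ diverges. (Only the $p\in\{0,\infty\}$ part, where one takes suprema instead of sums, tolerates your modification.) In short, uniform smallness beyond distance $w$ with no decay in the distance does not let the per-block errors accumulate to $C\varepsilon\|x\|$.

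For comparison, the paper sidesteps any band (or quasi-band) structure of the limit operators. Having the estimate for $A$ with a single $D$ valid for \emph{all} $F\subset\Zb^N$, it holds verbatim for every shifted copy $V_{-g_n}AV_{g_n}$, because shifting merely translates $F$. Given $A_g\in\sigma_{\op}(A)$, $F$ and $\epsilon>0$, one first applies the already proven statement to $A_g\in\Alp$ itself (with some $\hat D$ depending on $A_g$, which is harmless) only to replace a near-minimizer for $\nu(A_g|_F)$ by one of bounded support, so that $\nu(A_g|_F)+\epsilon\ge\nu(A_g|_{F\cap K})$ for a finite block $K$; then, since $(A_g-V_{-g_n}AV_{g_n})P_k\to0$ in norm, the uniform estimate for the shifted copies passes to $A_g$ on $F\cap K$, and hence, by $\nu_D(A_g|_{F\cap K})\ge\nu_D(A_g|_F)$, to $F$ itself with the same $D$ as for $A$. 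If you want to salvage your plan, this finite-block transfer is the missing mechanism: it replaces both your nonexistent band approximant $B'$ of $A_g$ and the unsound error accumulation.
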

\begin{proof}
Note that the algebra of band operators is dense in the Banach algebra of
band-dominated operators. Thus, given $A$ and $\delta$, there is a band operator
$A'$ such that $\|A-A'\|\leq\delta/3$, hence
$|\nu(A|_F)-\nu(A'|_F)|, |\nu_D(A|_F)-\nu_D(A'|_F)|\leq\|A-A'\|\leq\delta/3$ as well.
By the previous proposition there exists a $D>0$ such that
$\nu(A'|_F)+\delta/3\geq\nu_D(A'|_F)\geq\nu(A'|_F)$ for all $F\subset\Zb^N$.
Since $\nu_D(\cdot)\geq\nu(\cdot)$ always holds, we arrive at
$\nu(A|_F)+\delta\geq\nu_D(A|_F)\geq\nu(A|_F)$ for all $F\subset\Zb^N$.

Now, let $A_g\in\sigma_{\op}(A)$, $F\subset\Zb^N$ and $\epsilon>0$.
By the previous observation, applied to $A_g\in\Alp$, there is a $\hat{D}$ such that
$\nu(A_g|_F)+\epsilon/2 \geq\nu_{\hat{D}}(A_g|_F)$, and moreover there exists an
$x\in\im \chi_F I$, $\|x\|=1$, $\diam\supp x \leq \hat{D}$ with
$\nu_{\hat{D}}(A_g|_F)\geq \|A_g x\|-\epsilon/2$. Choosing $k$ sufficiently large such that
$K:=\{-k,\ldots,k\}^N\supset \supp x$ we find $\nu(A_g|_F)+\epsilon\ge\|A_g x\|\ge\nu(A_g|_{F\cap K})$.
Then
\begin{align*}
\nu(A_g|_F)+\epsilon\geq\nu(A_g|_{F\cap K})
&\geq\nu(V_{-g_n}AV_{g_n}P_k|_{F\cap K})-\|(A_g-V_{-g_n}AV_{g_n})P_k\|\\
&\geq\nu_D(V_{-g_n}AV_{g_n}P_k|_{F\cap K})-\delta-\|(A_g-V_{-g_n}AV_{g_n})P_k\|,
\end{align*}
where the latter tends to $\nu_D(A_gP_k|_{F\cap K})-\delta\geq\nu_D(A_g|_F)-\delta$ as
$n\to\infty$. Since $\epsilon$ is arbitrary, and obviously
$\nu_D(B|_F)\geq\nu_{\tilde{D}}(B|_F)\geq\nu(B|_F)$ holds whenever $\tilde{D}\geq D$,
the assertion follows.
\end{proof}

Now we are ready to prove \eqref{eq:BQ1}.
\begin{thm}\label{TMain}
Let $A\in\Alpr$. Then there exists a
$C\in\sigma_{\op}(A)$ with $\nu(C)=\min\{\nu(B):B\in\sigma_{\op}(A)\}$.
\end{thm}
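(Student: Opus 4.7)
Let $\mu := \inf\{\nu(B) : B \in \sigma_{\op}(A)\}$. The goal is to produce $C \in \sigma_{\op}(A)$ realizing this infimum. My plan combines three ingredients: the uniform approximation $\nu_D(B) \approx \nu(B)$ on $\sigma_{\op}(A)$ from Corollary~\ref{CLower}, the shift-invariance of the operator spectrum, and its sequential $\Pc$-compactness (Proposition~\ref{PRichOps}). This follows the three-bullet outline preceding Definition~\ref{def:nuD}: for a minimizing sequence, use Corollary~\ref{CLower} to produce witnesses of bounded support, translate so those supports sit in a fixed cube at the origin, and pass to a $\Pc$-strong limit.

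For each fixed $\eta > 0$, set $D := D(\eta)$ from Corollary~\ref{CLower} and take $B_n \in \sigma_{\op}(A)$ with $\nu(B_n) < \mu + 1/n$. The corollary yields unit vectors $y_n$ with $\diam\supp y_n \le D$ and $\|B_n y_n\| < \nu(B_n) + 2\eta$. Pick $j_n \in \Zb^N$ so that $\supp y_n \subset j_n + \{-D,\ldots,D\}^N$, and set $\tilde B_n := V_{-j_n} B_n V_{j_n} \in \sigma_{\op}(A)$ and $\tilde y_n := V_{-j_n} y_n$. Then $\nu(\tilde B_n) = \nu(B_n)$ and $\supp \tilde y_n$ lies in the fixed cube $\{-D,\ldots,D\}^N$. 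Sequential compactness produces a subsequence $\tilde B_{n_k} \pto C_\eta \in \sigma_{\op}(A)$; since $\tilde y_{n_k} = P_D \tilde y_{n_k}$ and $\|(C_\eta - \tilde B_{n_k})P_D\| \to 0$ by definition of $\Pc$-strong convergence, the bound $\|C_\eta \tilde y_{n_k}\| \le \|\tilde B_{n_k}\tilde y_{n_k}\| + \|(C_\eta - \tilde B_{n_k})P_D\|$ passes to the limit to give $\nu(C_\eta) \le \mu + 2\eta$.

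This first phase yields approximate minimizers. To reach $\nu(C) = \mu$ exactly, I would take $\eta_j \downarrow 0$ and extract, by sequential compactness, a $\Pc$-limit $C_{\eta_{j_\ell}} \pto C \in \sigma_{\op}(A)$. The inequality $\nu(C) \ge \mu$ is immediate; the main obstacle---which I expect to be the genuinely hard step---is the reverse $\nu(C) \le \mu$, since $\nu$ is not automatically lower semicontinuous under $\Pc$-strong convergence. I would handle this by a diagonal refinement: at stage $\ell$, choose the inner phase-one index $k_\ell$ large enough that the concrete approximant $\tilde B^{(\ell)}$ arising in the construction of $C_{\eta_{j_\ell}}$ is $\Pc$-close to $C$ on the projection $P_{D(\eta_{j_\ell})}$---possible because $\tilde B_k^{(\eta_{j_\ell})} \pto C_{\eta_{j_\ell}}$ (in $k$) and $C_{\eta_{j_\ell}} \pto C$ (in $\ell$) each supply arbitrarily good approximations on any fixed projection---while retaining the bounded-support witness $\tilde y^{(\ell)}$ with $\|\tilde B^{(\ell)}\tilde y^{(\ell)}\| \le \mu + O(\eta_{j_\ell})$. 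This forces $\|C\tilde y^{(\ell)}\| \to \mu$ along the diagonal, yielding $\nu(C) \le \mu$.
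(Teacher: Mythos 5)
Your first phase is sound and is, in essence, the single-scale core of the paper's own argument: use Corollary~\ref{CLower} to get a witness of bounded diameter for each member of a minimizing sequence, recenter by a shift (which stays inside $\sigma_{\op}(A)$), and pass to a $\Pc$-strong limit, exploiting that convergence on a fixed cube is norm convergence. This correctly yields, for each fixed $\eta>0$, an operator $C_\eta\in\sigma_{\op}(A)$ with $\nu(C_\eta)\le\mu+2\eta$. The gap is in the second phase. Your diagonal refinement needs $\|(C_{\eta_{j_\ell}}-C)P_{D(\eta_{j_\ell})}\|$ (equivalently $\|(\tilde B^{(\ell)}-C)P_{D(\eta_{j_\ell})}\|$) to become small as $\ell\to\infty$, where $D(\eta)$ is the diameter supplied by Corollary~\ref{CLower} and, in general, $D(\eta_{j_\ell})\to\infty$ as $\eta_{j_\ell}\downarrow 0$. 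But $\Pc$-strong convergence only gives $\|(C_{\eta_j}-C)P_m\|\to0$ for each \emph{fixed} $m$; there is no uniformity over growing projections, and you cannot choose the index $j_\ell$ after fixing the projection, because the projection you need is itself determined by $\eta_{j_\ell}$ -- the requirement is circular. So your parenthetical justification (``arbitrarily good approximations on any fixed projection'') is true for the inner limit in $k$ but does not deliver the outer estimate you need: the witness $\tilde y^{(\ell)}$ lives in a cube whose size grows with $\ell$, and the closeness of $C$ to $\tilde B^{(\ell)}$ on that cube is not controlled. This is exactly the failure mode of the paper's cautionary example $B_n:=P_n+\frac1n(I-P_n)\pto I$: every member of the sequence has a good witness, but its support escapes together with the index, so nothing passes to the limit, and your $C$ may well satisfy $\nu(C)>\mu$.

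The paper closes precisely this gap with a multi-scale construction that your outline does not contain: for each $n$ it manufactures a \emph{single} shifted copy $C_n$ of $B_n$ carrying near-minimizing witnesses at \emph{all} accuracy levels $l\le n$ simultaneously, inside the fixed nested cubes $F_{3D_l}$ that do not depend on $n$; this is \eqref{EShiftedC}. The iterated recentering loses only $\delta_{n-k}$ at step $k$, so the losses from level $l$ onwards sum to less than $r_l$ (giving \eqref{eq:left}), while the diameters grow geometrically, $D_{k+1}>2D_k$, so that the shifts accumulated after level $l$ stay inside $F_{D_l}$ and the relevant window remains $F_{3D_l}$ (giving \eqref{eq:right}). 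Only because these windows are fixed independently of $n$ does the $\Pc$-limit $C$ of a subsequence inherit $\nu(C)\le\nu(C|_{F_{3D_l}})\le\mu+r_l$ for every $l$, whence $\nu(C)\le\mu$. Repairing your phase two would essentially amount to re-inventing this nested recentering with summable accuracies and geometrically controlled shift budget; as written, the proposal stops exactly at the point where the real difficulty of Theorem~\ref{TMain} begins.
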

\begin{proof}
We consider the numbers $\delta_k:=2^{-k}$ and define
$r_l:=\sum_{k=l}^\infty \delta_k=2^{-l+1}$. Then $(r_l)$ forms a
strictly decreasing sequence of positive numbers which tends to $0$.
From Corollary \ref{CLower} we obtain a sequence $(D_k)\subset\Nb$ of
even numbers such that, for every $k\in\Nb$,
\[D_{k+1} > 2 D_k\quad\text{and}\quad
\nu(B|_F) + \delta_k/2 > \nu_{D_k}(B|_F) \quad
\text{for every } B\in\{A\}\cup\sigma_{\op}(A)\text{ and every } F\subset\Nb.\]
Choose a sequence $(B_n)\subset\sigma_{\op}(A)$ such that
$\nu(B_n)\to\inf\{\nu(B):B\in\sigma_{\op}(A)\}$ as $n\to\infty$.
For every $n\in\Nb$ we are going to construct a suitably shifted copy
$C_n\in\sigma_{\op}(A)$ of $B_n$ such that
\begin{equation}\label{EShiftedC}
\nu(C_n|_{F_{3D_l}}) < \nu(B_n) + r_l
\quad\text{for all}\quad l\leq n,
\end{equation}
where we put $F_s:=\{-s/2,\ldots,s/2\}^N$ for every even integer $s$.
For this we firstly note that there exists an $x_n^0\in\Xb$, $\|x_n^0\|=1$,
$\diam\supp x_n^0\leq D_n$ such that
$\|B_nx_n^0\|<\nu_{D_n}(B_n) +\delta_n/2<\nu(B_n)+\delta_n$.
Choose a shift $j_n^0\in\Zb^N$ in order to
centralize $y_n^0:=V_{j_n^0}x_n^0$, which means that $\supp y_n^0\subset F_{D_n}$,
and set $C_n^0:= V_{j_n^0} B_n V_{-j_n^0}\in\sigma_{\op}(A)$. Then
\[\nu(B_n)=\nu(C_n^0)\leq\nu(C_n^0|_{F_{D_n}})\leq\|C_n^0y_n^0\|
=\|B_nx_n^0\|<\nu(B_n)+\delta_n.\]
Next, for $k=1,\ldots,n$, we gradually find $x_n^k$,
 $\|x_n^k\|=1$, $\supp x_n^k\subset F_{D_{n-(k-1)}}$, $\diam\supp x_n^k\leq D_{n-k}$
such that
\begin{align*}
\|C_n^{k-1}x_n^k\|
&<\nu_{D_{n-k}}(C_n^{k-1}|_{F_{D_{n-(k-1)}}})+\delta_{n-k}/2
<\nu(C_n^{k-1}|_{F_{D_{n-(k-1)}}})+\delta_{n-k},
\end{align*}
pass to a centralized copy $y_n^k:=V_{j_n^k}x_n^k$ of $x_n^k$ via a suitable shift
$j_n^k\in F_{D_{n-(k-1)}}$
and then define
$C_n^k:= V_{j_n^k} C_n^{k-1} V_{-j_n^k}\in\sigma_{\op}(A)$. For this operator
we observe
\begin{align*}
\nu(C_n^k|_{F_{D_{n-k}}}) \leq \|C_n^ky_n^k\|=\|C_n^{k-1}x_n^k\|
<\nu(C_n^{k-1}|_{F_{D_{n-(k-1)}}})+\delta_{n-k}.
\end{align*}

\begin{center}
\begin{tikzpicture}[scale=1.0]
   \foreach \x in {-1.5,2.25}\draw[fill, color=gray!50] (\x,-0.1)  -- ++(2.5,0) -- ++(0,0.2)  -- ++(-2.5,0) -- cycle;
   \draw[->] (-7,0)--(7,0);
   \foreach \x in {-6,0,3.75,6} \draw[thick] (\x,-0.2)--(\x,0.2);
   \draw [decorate, decoration={brace, amplitude=12pt}, thick] (6,-1.4)--(-6,-1.4)
       node [midway, below=8pt]{$F_{D_{n-(k-1)}}$};
   \foreach \x in {-2,1.75,2,5.75} \draw (\x,-0.15)--(\x,0.15);
   \draw [decorate, decoration={brace, amplitude=6pt}, thick] (1.0,-0.2)--(-1.5,-0.2)
       node [midway, below=2pt]{$\supp y_{n}^{k}$};
   \draw [decorate, decoration={brace, amplitude=8pt}, thick] (2.0,-0.8)--(-2.0,-0.8)
       node [midway, below=3pt]{$F_{D_{n-k}}$};
   \draw [decorate, decoration={brace, amplitude=6pt}, thick] (2.25,0.2)--(4.75,0.2)
       node [midway, above=4pt]{$\supp x_{n}^{k}$};
   \draw [decorate, decoration={brace, amplitude=8pt}, thick] (1.75,0.8)--(5.75,0.8)
       node [midway, above=3pt]{$j_{n}^{k}+F_{D_{n-k}}$};
   \draw (0,0.4) node {$0$};
   \draw (3.75,-0.4) node {$j_{n}^{k}$};
   \foreach \x in {1.75,5.75} \foreach \y in {0.1,0.2,...,0.7} \draw (\x,\y) -- +(0,0.05);
   \foreach \x in {-2.0,2.0} \foreach \y in {-0.1,-0.2,...,-0.7} \draw (\x,\y) -- +(0,-0.05);
   \foreach \x in {-6.0,6.0} \foreach \y in {-1.3,-1.2,...,-0.1} \draw (\x,\y) -- +(0,-0.05);
\end{tikzpicture}\\
{\scriptsize Figure 1: An illustration of the inclusion $\supp x_{n}^{k}\subset j_{n}^{k}+F_{D_{n-k}}\subset F_{D_{n-(k-1)}}$ in dimension $N=1$.}
\end{center}

\noindent
In particular, for all for $l=0,\ldots,n$ this yields
\begin{equation} \label{eq:left}
\nu(C^{n-l}_n|_{F_{D_l}})
<\nu(B_n)+\delta_l+\delta_{l+1}+\ldots+\delta_n < \nu(B_n)+r_l.
\end{equation}
Finally, we define $C_n:=C_n^n$ and take into account that
$C_n=V_{j_n^n+\ldots+j_n^{n-l+1}} C_n^{n-l} V_{-(j_n^n+\ldots+j_n^{n-l+1})}$,
where $j_n^n+\ldots+j_n^{n-l+1} \in F_{D_l}$ by construction.
Thus, we get
\begin{equation} \label{eq:right}
\nu(C_n|_{F_{3D_l}}) \leq \nu(C^{n-l}_n|_{F_{D_l}}),
\end{equation}
which, together with \eqref{eq:left}, implies \eqref{EShiftedC}.

By the first result in Proposition \ref{PRichOps} we can pass to a subsequence
$(C_{h_n})$ of $(C_n)$ with $\Pc$-strong limit $C\in\sigma_{\op}(A)$.
Then, by sending $n\to\infty$, and since
$(C-C_{h_n})|_{F_{3D_l}}=(C-C_{h_n})P_{3D_l}|_{F_{3D_l}}$ converges to zero
in the norm, we get
\begin{align*}
\nu(C)&\leq\nu(C|_{F_{3D_l}})= \lim_{n\to\infty}\nu(C_{h_n}|_{F_{3D_l}})
\leq \lim_{n\to\infty}\nu(B_{h_n})+r_l
=\inf\{\nu(B):B\in\sigma_{\op}(A)\}+r_l
\end{align*}
for every $l$. Since $r_l\to0$ as $l\to\infty$ we arrive at the assertion.
\end{proof}

\begin{rem}
Here is one (but not the only) way to digest this proof: For every $n\in\Nb$,
we look at $n+1$ translates, $C_n^0,...,C_n^n$, of the operator $B_n$. The lower norm
of these translates is studied on nested sets with decreasing diameters $D_n,...,D_0$.

\begin{center}
\begin{tikzpicture}[scale=1.0]
   \draw (0,0) node[draw,rectangle] {$B_n$};
   \draw (2,0) node {$C_n^0$};
   \draw (4,0) node {$\cdots$};
   \draw (6,0) node[draw,rectangle] {$C_n^{n-l}$};
   \draw (8.0,0) node {$\cdots$};
   \draw (10.1,0) node {$C_n^{n-1}$};
   \draw (12.5,0) node[draw,rectangle] {$C_n^n=:C_n$};
   \foreach \x in {1,3,5,9,11} \draw (\x,0.1) node {$\xRightarrow{\rm shift}$};
   \draw (7.1,0.1) node {$\xRightarrow{\rm shift}$};
   \draw (1,-0.25) node {\tiny $\Delta\nu\!<\!\delta_n$};
   \draw (3,-0.25) node {\tiny $\Delta\nu\!<\!\delta_{n-1}$};
   \draw (5,-0.25) node {\tiny $\Delta\nu\!<\!\delta_l$};
   \draw (7.1,-0.25) node {\tiny $\Delta\nu\!<\!\delta_{l+1}$};
   \draw (9,-0.25) node {\tiny $\Delta\nu\!<\!\delta_{1}$};
   \draw(11,-0.25) node {\tiny $\Delta\nu\!<\!\delta_0$};
   \draw [thick] (0,-0.26) -- (0,-0.9) -- (5.8,-0.9) -- (5.8,-0.3);
   \draw [thick] (6.2,-0.3) -- (6.2,-0.9) -- (12.5,-0.9) -- (12.5,-0.28);
   \draw (2.9,-0.7) node {\small summation};
   \draw (2.9,-1.15) node {\small $\Delta\nu<\delta_n+...+\delta_l<r_l\quad \Rightarrow\quad $\eqref{eq:left}};
   \draw (9.35,-0.7) node {\small geometric argument};
   \draw (9.35,-1.15) node {\small $|{\rm shift}|<\frac{D_l}2+...+\frac{D_1}2<D_l\quad \Rightarrow$\quad \eqref{eq:right}};
\end{tikzpicture}
\end{center}

\noindent
For $l\in\{0,...,n\}$, the difference between the restricted lower norm of $C_n^{n-l}$ and the lower norm of $B_n$ is less than $\delta_l+...+\delta_n<r_l$, that is \eqref{eq:left}. On the other hand, the overall shift distance taking $C_n^{n-l}$ to $C_n^n=C_n$ is bounded by $D_l$, which shows \eqref{eq:right}. Together, these two arguments bridge the gap between $\nu(B_n)$ and the restricted lower norm of $C_n$, that is \eqref{EShiftedC}, showing that this gap is bounded by $r_l$, which is independent of $n$. Then first, by sending $n\to\infty$, we pass to the limit of a $\Pc$-convergent subsequence of $C_n$, which is our $C\in\sigma_{\op}(A)$, and finally let $l\to\infty$.
\end{rem}

\begin{cor}\label{CMain}
Let $A\in\Alpr$. If all operators in $\sigma_{\op}(A)$ are invertible then the
norms of their inverses are uniformly bounded.
\end{cor}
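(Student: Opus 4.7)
The plan is to derive this corollary as an essentially immediate consequence of Theorem \ref{TMain}, using only the elementary relationship between the lower norm and the norm of the inverse for an invertible operator.

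First I would recall that for any invertible operator $B\in\lb{\Xb}$, the lower norm satisfies $\nu(B)=1/\|B^{-1}\|$. Indeed, from $\|x\|=\|B^{-1}Bx\|\le\|B^{-1}\|\|Bx\|$ we get $\nu(B)\ge 1/\|B^{-1}\|$, and conversely, applying the definition of $\nu(B)$ to $x=B^{-1}y/\|B^{-1}y\|$ for arbitrary nonzero $y$ yields $\nu(B)\le \|y\|/\|B^{-1}y\|$, hence $\|B^{-1}\|\le 1/\nu(B)$. So invertibility of $B$ is equivalent (together with surjectivity) to $\nu(B)>0$ with the reciprocal identity, and uniform boundedness of the inverses in $\sigma_{\op}(A)$ is exactly uniform positivity of their lower norms.

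Given this, the corollary reduces to showing $\inf\{\nu(B):B\in\sigma_{\op}(A)\}>0$. By Theorem \ref{TMain}, this infimum is attained: there exists $C\in\sigma_{\op}(A)$ with $\nu(C)=\min\{\nu(B):B\in\sigma_{\op}(A)\}$. Since $C$ lies in $\sigma_{\op}(A)$, our hypothesis forces $C$ to be invertible, and therefore $\nu(C)=1/\|C^{-1}\|>0$. Consequently, for every $B\in\sigma_{\op}(A)$, $\|B^{-1}\|=1/\nu(B)\le 1/\nu(C)<\infty$, which is the desired uniform bound.

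There is no genuine obstacle here; all the work has already been absorbed into Theorem \ref{TMain}. The only fine point worth spelling out in the write-up is that Theorem \ref{TMain} asserts the infimum is actually a \emph{minimum}, which is precisely what converts ``every element is invertible'' into a uniform bound — any proof that only gave an infimum over $\sigma_{\op}(A)$ without attainment would be insufficient to rule out the pathological scenario envisioned in the strategy discussion after \eqref{eq:BQ1}.
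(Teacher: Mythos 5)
Your proof is correct and rests on exactly the same ingredients as the paper's: Theorem \ref{TMain} together with the identity $\nu(B)=1/\|B^{-1}\|$ for invertible $B$. The only cosmetic difference is that you argue directly (the minimizer $C$ is invertible, so $\nu(C)>0$ bounds all $\|B^{-1}\|\le 1/\nu(C)$), whereas the paper phrases it as a contradiction (if $\|B_n^{-1}\|\to\infty$ then $\nu(B_n)\to0$, forcing a non-invertible $C\in\sigma_{\op}(A)$); these are the same argument.
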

\begin{proof}
Assume that the norms of the inverses are not bounded, i.e. there is a sequence
$(B_n)\subset\sigma_{\op}(A)$ with $\|B_n^{-1}\|\to\infty$ as $n\to\infty$. Then
$\nu(B_n)\to 0$ and the previous theorem yields an operator $C\in\sigma_{\op}(A)$
with $\nu(C)=0$, i.e. $C$ is not invertible, a contradiction.
\end{proof}

Now, the desired simplification of Theorem \ref{TUB} is at hand:
\begin{thm}\label{CBig}
Let $A\in\Alpr$, i.e. $A$ is a rich band-dominated operator on a space $\Xb=l^p(\Zb^N,X)$
with parameters $p\in\{0\}\cup[1,\infty]$, $N\in\Nb$ and a Banach space $X$. Then $A$ is
$\Pc$-Fredholm if and only if all its limit operators are invertible.
\end{thm}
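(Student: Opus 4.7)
The plan is to observe that Theorem \ref{CBig} is now essentially a one-line consequence of the preceding machinery: Theorem \ref{TUB} already gives the characterization of $\Pc$-Fredholmness in terms of invertibility of all limit operators \emph{plus} uniform boundedness of the inverses, and Corollary \ref{CMain} (itself a consequence of the main Theorem \ref{TMain}) shows that the uniform boundedness condition is redundant. So all that remains is to combine these two statements.

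First I would handle the ``only if'' direction: if $A\in\Alpr$ is $\Pc$-Fredholm, then Theorem \ref{TUB} directly provides that every $B\in\sigma_{\op}(A)$ is invertible (and, as a bonus, that the inverses are uniformly bounded, but this is not needed here). No extra argument is required on this side.

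Next I would tackle the ``if'' direction, which is the real content. Assume every $B\in\sigma_{\op}(A)$ is invertible. Since $A\in\Alpr$, Corollary \ref{CMain} applies and yields that $\sup\{\|B^{-1}\|:B\in\sigma_{\op}(A)\}<\infty$, i.e.\ the inverses are uniformly bounded. With both invertibility and uniform boundedness of the inverses now in hand, Theorem \ref{TUB} (applied in the reverse direction) concludes that $A$ is $\Pc$-Fredholm.

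There is no serious obstacle at this final stage; all the hard work has been done in Proposition \ref{prop:5}, Corollary \ref{CLower}, Theorem \ref{TMain}, and Corollary \ref{CMain}, which is where the ``nasty condition'' was eliminated. The only small point to be careful about is to make sure that the hypotheses of both Theorem \ref{TUB} and Corollary \ref{CMain} genuinely cover the same class $\Alpr$ on the same scale of spaces $l^p(\Zb^N,X)$ with $p\in\{0\}\cup[1,\infty]$, which is exactly the setting in which both results have been stated in this paper. Once this matching is noted, the proof is complete by one application of Corollary \ref{CMain} followed by Theorem \ref{TUB}.
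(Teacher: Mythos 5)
Your proposal is correct and matches the paper's (implicit) argument exactly: the paper presents Theorem \ref{CBig} as an immediate consequence of combining Theorem \ref{TUB} with Corollary \ref{CMain}, which is precisely your route. Nothing further is needed.
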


Furthermore, we can derive also a formula for the $\Pc$-essential spectrum of $A$, that
is the spectrum of the coset $A+\pc{\Xb}$ in $\pb{\Xb}/\pc{\Xb}$, as it was already
announced in \cite{Marko, LiChW}.
\begin{cor} \label{cor:ess.spec}
Let $A\in\Alpr$. Then
\[\spess(A)=\bigcup_{B\in\sigma_{\op}(A)}\spc(B).\]
\end{cor}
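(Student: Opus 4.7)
The plan is to reduce the statement to Theorem \ref{CBig} by the standard trick of replacing $A$ with $A-\lambda I$ for each $\lambda\in\Cb$. Recall that $\lambda\in\spess(A)$ precisely when the coset $(A-\lambda I)+\pc{\Xb}$ is not invertible in $\pb{\Xb}/\pc{\Xb}$, which by definition means that $A-\lambda I$ is not $\Pc$-Fredholm. So, by taking complements, proving the claimed identity amounts to showing
\[
A-\lambda I\text{ is }\Pc\text{-Fredholm}\quad\Longleftrightarrow\quad \lambda\notin\spc(B)\text{ for every }B\in\sigma_{\op}(A).
\]

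First I would check that $A-\lambda I$ lies in $\Alpr$. Since $\lambda I$ is a (constant-diagonal) band operator and $\Alpr$ is a closed subalgebra of $\pb{\Xb}$ containing $I$, the element $A-\lambda I$ inherits richness and band-dominatedness from $A$. Next I would identify its operator spectrum. For any sequence $h=(h_n)\subset\Zb^N$ tending to infinity, $V_{-h_n}(A-\lambda I)V_{h_n}=V_{-h_n}AV_{h_n}-\lambda I$, so the $\Pc$-strong limit of the left-hand side exists if and only if the limit operator $A_h$ exists, in which case $(A-\lambda I)_h=A_h-\lambda I$. Consequently
\[
\sigma_{\op}(A-\lambda I)\;=\;\{B-\lambda I:B\in\sigma_{\op}(A)\}.
\]

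With these two ingredients, Theorem \ref{CBig} applied to $A-\lambda I\in\Alpr$ gives
\[
A-\lambda I\text{ is }\Pc\text{-Fredholm}\ \Longleftrightarrow\ B-\lambda I\text{ is invertible for every }B\in\sigma_{\op}(A),
\]
and the right-hand condition is precisely $\lambda\notin\bigcup_{B\in\sigma_{\op}(A)}\spc(B)$. Taking complements yields the asserted formula.

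There is no substantive obstacle here, so this is really a bookkeeping argument on top of Theorem \ref{CBig}; the only points that require a line of verification are that $\Alpr$ is an algebra containing $I$ (from Proposition \ref{PRichOps}) and the shift identity $V_{-h_n}(A-\lambda I)V_{h_n}=V_{-h_n}AV_{h_n}-\lambda I$ that identifies $\sigma_{\op}(A-\lambda I)$. Without the new Theorem \ref{CBig}, the analogous statement would only yield the essential spectrum in terms of \emph{uniformly} invertible limit operators; it is precisely the removal of the uniform boundedness condition that makes the clean union formula above possible.
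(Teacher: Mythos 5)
Your proposal is correct and follows essentially the same route as the paper: replace $A$ by $A-\lambda I$, note $\sigma_{\op}(A-\lambda I)=\{B-\lambda I:B\in\sigma_{\op}(A)\}$, and invoke the Fredholm criterion for limit operators (the paper cites Theorem \ref{TUB} for one inclusion and the new elementwise criterion for the other, which your single application of Theorem \ref{CBig} covers). Your added verifications (that $A-\lambda I\in\Alpr$ and the shift identity) are correct bookkeeping that the paper leaves implicit.
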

\begin{proof}
If $\lambda\notin\spess(A)$, i.e. $A-\lambda I$ is $\Pc$-Fredholm, then
$\sigma_{\op}(A-\lambda I)$ is uniformly invertible, which yields that $\lambda$
does not belong to the right hand side.
Otherwise, if $A-\lambda I$ is not $\Pc$-Fredholm, then there is a non-invertible
limit operator $B-\lambda I\in\sigma_{\op}(A-\lambda I)$, hence $\lambda$ is
in the set on the right.
\end{proof}

\section{On the necessity of the conditions in Theorem \ref{TMain}}

\begin{ex}
There exists a non-rich diagonal operator $A$ on $\Xb=l^p(\Zb,L^p[0,1])$ for which
the assertion of Theorem \ref{TMain} does not hold. Indeed, define the functions
$b_k(t):=1/k+1+\sin(2\pi kt)$ and the operators $B_k:=b_kI$ on $L^p[0,1]$. Moreover,
let
\[C_n:=\diag(\underbrace{B_1,\ldots,B_1}_{n \text{ times}},
\underbrace{B_2,\ldots,B_2}_{n \text{ times}},
\ldots,\underbrace{B_n,\ldots,B_n}_{n \text{ times}})\]
and, finally, $A:=\diag(\ldots,I,I,C_1,C_2,\ldots,C_n,\ldots)$.
Then $\sigma_{\op}(A)$ consists of the identity $I$, the operators
 $\diag(\ldots,B_k,B_k,\ldots)$, $k\in\Nb$, and all operators
$V_{-l}\diag(\ldots,B_k,B_k,B_{k+1},B_{k+1}\ldots)V_{l}$ with $k\in\Nb$ and $l\in\Zb$.
Clearly, $\inf\{\nu(B):B\in\sigma_{\op}(A)\}=0$, but there is no limit operator
$B\in\sigma_{\op}(A)$ with $\nu(B)=0$.
\end{ex}

\begin{ex}
There exists a rich operator in $\pb{l^p(\Zb,\Cb)}$, $1<p<\infty$, for which the
assertion of Theorem \ref{TMain} does not hold. For its construction we introduce the
$n\times n$-blocks
\[B_n:=\frac{1}{n}\begin{pmatrix}1 & \cdots & 1 \\ \vdots & & \vdots \\ 1 & \cdots & 1\end{pmatrix}\]
and note that $\|B_n\|_p=1$. To see this let $v=(1,\ldots,1)$, observe that
$B_nx=1/n\left\langle x,v\right\rangle v$, and use H\"older's inequality
$\|B_nx\|_p\leq1/n\|x\|_p\|v\|_q\|v\|_p=\|x\|_p$ (where $1/p+1/q=1$) with equality
for $x=v$. Further, $\|B_nP_k\|_p\leq (2k+1)/n\|v\|_p = (2k+1)/n\sqrt[p]{n}\to 0$
as $n\to\infty$.
Now, let $C_k:=I-\frac{k}{k+1}B_k$ for every $k$,
\[D_n:=\diag(\underbrace{C_1,\ldots,C_1}_{n \text{ times}},
\underbrace{C_2,\ldots,C_2}_{n \text{ times}},
\ldots,\underbrace{C_n,\ldots,C_n}_{n \text{ times}})\]
and, finally, again $A:=\diag(\ldots,I,I,D_1,D_2,\ldots,D_n,\ldots)$.
Then $\sigma_{\op}(A)$ consists of the identity $I$, the operators
 $\diag(\ldots,C_k,C_k,\ldots)$, $k\in\Nb$, all operators
$V_{-l}\diag(\ldots,C_k,C_k,C_{k+1},C_{k+1}\ldots)V_{l}$ with $k\in\Nb$ and $l\in\Zb$,
and all operators
$V_{-l}\diag(\ldots,I,I,C_{1},C_{1}\ldots)V_{l}$ with $l\in\Zb$.
Clearly, $\inf\{\nu(B):B\in\sigma_{\op}(A)\}=0$, but there is no limit operator
$B\in\sigma_{\op}(A)$ with $\nu(B)=0$.
\end{ex}

For the extremal $p$ this cannot happen, since \textit{rich} already implies
\textit{band-dominated}, as the next theorem demonstrates.
\begin{thm}
Let $p\in\{0,1,\infty\}$. Then $\pbr{\Xb}=\Alpr$.
\end{thm}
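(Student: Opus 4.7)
The plan is to establish the non-trivial inclusion $\pbr{\Xb}\subseteq\Alpr$; the reverse inclusion is immediate from $\Alp\subseteq\pb{\Xb}$ combined with the definition of richness. Take $A\in\pbr{\Xb}$ with $p\in\{0,1,\infty\}$ and suppose, for contradiction, that $A$ is not band-dominated. The goal is to exhibit a shift sequence $h=(h_n)$ tending to infinity such that no subsequence of the conjugates $V_{-h_n}AV_{h_n}$ is $\Pc$-strongly convergent, contradicting richness of $A$.

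The heart of the argument is turning the qualitative assumption $A\notin\Alp$ into a \emph{localized} quantitative certificate of the shape $\|\chi_{U_n}Ax_n\|\ge\epsilon$, with $x_n$ a unit vector and $\dist(U_n,\supp x_n)\to\infty$, where moreover $\supp x_n$ or $U_n$ is a single point. Such a certificate is available precisely because of the row/column-sum description of the operator norm on the extremal $l^p$-spaces. For $p=1$, $\|A-A_w\|$ is a column-supremum, so non-band-dominatedness yields $\epsilon>0$, $j_n\in\Zb^N$, $w_n\to\infty$, and unit vectors $x_n=e_{j_n}\otimes\xi_n$ supported at the single point $j_n$ with $\|\chi_{U_n}Ax_n\|\ge\epsilon$ for $U_n:=\{i:|i-j_n|>w_n\}$. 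For $p=\infty$ the symmetric role-swap gives $U_n=\{i_n\}$ a singleton and $x_n$ a suitably chosen unit vector supported in $\{j:|j-i_n|>w_n\}$. The case $p=0$ reduces to $p=\infty$ by restriction to sequences vanishing at infinity.

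Next I centre the bad piece at the origin: set $h_n:=j_n$ (resp.\ $h_n:=i_n$), $\tilde A_n:=V_{-h_n}AV_{h_n}$, $\tilde x_n:=V_{-h_n}x_n$, and $\tilde U_n:=U_n-h_n$. Since $h=(h_n)$ tends to infinity, richness supplies a subsequence with $\tilde A_n\pto B$ for some $B\in\pb{\Xb}$. By construction, either $\tilde x_n$ is supported at $\{0\}=\im P_0$ (the $p=1$ case) or $\tilde U_n=\{0\}$ (the $p=\infty$ case); hence the $\Pc$-strong convergence applied to the $P_0$-side gives $\|\tilde A_n\tilde x_n-B\tilde x_n\|\to 0$ (resp.\ $\|P_0\tilde A_n-P_0B\|\to 0$), and the bad inequality transfers to $\|\chi_{\tilde U_n}B\tilde x_n\|\ge\epsilon-o(1)$. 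The contradiction is now $\Pc$-compact: since $P_0\in\pc{\Xb}$ and $B\in\pb{\Xb}$, both $BP_0$ and $P_0B$ lie in $\pc{\Xb}$, so $\|(I-P_k)BP_0\|\to 0$ (or $\|P_0B(I-P_k)\|\to 0$) as $k\to\infty$. As either $\tilde U_n$ or $\supp\tilde x_n$ escapes to infinity, an iterated limit ($n\to\infty$ then $k\to\infty$) forces $\|\chi_{\tilde U_n}B\tilde x_n\|\to 0$, contradicting the lower bound.

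The main obstacle is precisely the initial localization step. The row/column-sum formula for the operator norm on $l^1$ and $l^\infty$ is what makes the bad off-diagonal mass of a non-band-dominated operator detectable by a test vector with \emph{singleton} support; for $p\in(1,\infty)$ the operator norm is realized by delocalized vectors, and the certificate can genuinely fail. Indeed, the second example in Section~4 constructs a rich $A\in\pb{l^p(\Zb,\Cb)}\setminus\Alp$ for $1<p<\infty$ in which the off-diagonal mass of rank-one blocks cooperates to inflate $\|A-A_w\|$ while remaining small on every localized test vector. Apart from this step the remainder of the argument is valid for any $p$, which is why the hypothesis $p\in\{0,1,\infty\}$ is sharp at exactly the extremal exponents.
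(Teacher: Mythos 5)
Your route is genuinely different from the paper's. The paper never looks at matrix truncations: it uses the commutator characterization of band-dominatedness ($A\in\Alp$ iff $\|[A,\varphi_tI]\|\to0$ as $t\to0$, cited from Rabinovich--Roch--Silbermann), exploits the sup-norm structure of $l^\infty$ to locate points $h_n$ where $\|[A,\varphi_{t^{(n)}}I]\|$ is almost attained, and then lets richness produce limit operators of both $A$ and $\varphi_{t^{(n)}}I$ along $(h_n)$; since the latter limit is a scalar multiple of the identity, the limiting commutator is zero while being bounded below, a contradiction. The cases $p=1$ and $p=0$ are then obtained by duality via the last part of Proposition \ref{PRichOps}. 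You instead work directly with the band truncations $B_w$, using that on $l^1$ (resp.\ $l^\infty$, $l^0$) the operator norm is a supremum of column (resp.\ row) norms, so $\|A-B_w\|$ is exactly an off-band mass witnessed by singleton-supported test vectors (resp.\ singleton rows); after centering the singleton at the origin the contradiction comes from $BP_0,\,P_0B\in\pc{\Xb}$ for the $\Pc$-strong limit $B$. These steps I can confirm: the column/row description of the norm on the extremal spaces, the domination $\|A-B_w\|\le\|A-A'\|$ for every band operator $A'$ of band-width at most $w$ (so non-band-dominatedness does yield a certificate with $w_n\to\infty$), the transfer of the lower bound through $P_0$ along $\Pc$-strong convergence, and the final $\Pc$-compactness argument are all correct, and your argument treats $p=1$, $p=\infty$, $p=0$ directly rather than by duality.

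There is, however, one genuine gap: the clause ``Since $h=(h_n)$ tends to infinity, richness supplies a subsequence\dots''. Nothing you have proved forces the bad indices $j_n$ (or $i_n$) to escape to infinity. Non-band-dominatedness alone does not do this when $\dim X=\infty$: a single column can carry off-band mass that never decays. For instance, on $l^1(\Zb,X)$ with $X=l^1(\Nb)$, let $A$ have only the column $j=0$ nonzero and send $\xi\in X$ to the sequence $(\xi_{\sigma(i)}e_1)_{i\in\Zb}$, $\sigma:\Zb\to\Nb$ a bijection; this is a norm-one operator whose column-$0$ tails $\sup_{\|\xi\|=1}\sum_{|i|>w}|\xi_{\sigma(i)}|$ equal $1$ for every $w$, so the certificate sits forever at $j_n=0$. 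What rescues your argument is precisely that $A\in\pbr{\Xb}\subset\pb{\Xb}$: then $A\chi_{\{j\}}I$ and $\chi_{\{i\}}IA$ are $\Pc$-compact, so the off-band mass of every fixed column/row --- uniformly over any bounded set of indices --- does decay; choosing $w_n$ large compared with $n$ (and with these moduli) then forces $|j_n|\to\infty$. Equivalently, you can split into cases: if the bad indices have a bounded subsequence, pass to a constant one and contradict $A\chi_{\{j\}}I\in\pc{\Xb}$ (resp.\ $\chi_{\{i\}}IA\in\pc{\Xb}$) directly, without richness; otherwise $h_n\to\infty$ and your argument applies. This step needs to be written out --- as it stands the appeal to richness is unjustified, and the example above shows it is not a formality. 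With that repair, and a slightly more careful sentence for $p=0$ (the same row argument runs verbatim inside $l^0$), your proof is complete.
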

\begin{proof}
As a start, and in view of the characterization of band-dominated operators as given in
\cite[Theorem 2.1.6(e)]{LimOps}, let $A\in\pbr{l^\infty(\Zb^N,X)}$, put
$\varphi_t(s):=e^{\ii(s_{1}t_{1}+\ldots+s_{N}t_{N})}$ for $s\in\Zb^{N}, t\in\Rb^{N}$ and assume that
$A$ is not band-dominated. Then $\|[A,\varphi_tI]\|\not\to 0$ as $t\to 0$.
Thus, there exists a sequence $(t^{(n)})$ in $\Rb^{N}$ tending to zero and a constant $c > 0$
such that $\|[A,\varphi_{t^{(n)}}I]\|\to c$. We choose points $x_n\in l^\infty$,
$\|x_n\|_\infty=1$, such that $\|[A,\varphi_{t^{(n)}}I]x_n\|_\infty\to c$. Now, due to
the definition of  $\|\cdot\|_\infty$ there exists a sequence $(h_n)\subset\Nb$ such
that $\|\chi_{\{0\}}V_{h_n}[A,\varphi_{t^{(n)}}I]x_n\|_\infty\to c$.
Since $A$ is rich and $t^{(n)}\to 0$, we can pass to a subsequence $(g_k)=(h_{n_k})$, such
that the respective $\Pc$-limits of $(V_{g_k}AV_{-g_k})$ and
$(V_{g_k}\varphi_{t^{(n_k)}} V_{-g_k})$ exist. (Note that in the case of $(h_n)$ being
bounded, one can pass to a constant subsequence with the same outcome.)
Clearly, the latter limit is just a multiple of
the identity, let's say $\alpha I$. Nevertheless, from the above we deduce
$\|[A_{g},\alpha I]V_{g_k}x_{n_k}\|_\infty\geq c/2$ for sufficiently
large $k$, a contradiction. So, the equivalence of (a) and (e) in \cite[Theorem 2.1.6]{LimOps}
now provides that $A$ is band-dominated.

Now assume that $p=1$. Then, by the third assertion in Proposition \ref{PRichOps},
$A^*\in\pbr{l^\infty(\Zb^N,X^*)}$, hence $A^*$ is band-dominated by the above
observation. Since $\|[A,\varphi_tI]\|=\|[A,\varphi_tI]^*\|$ for arbitrary $\varphi$,
Theorem 2.1.6 in \cite{LimOps} again provides that $A$ is band-dominated. The case $p=0$ is
analogous.
\end{proof}

\begin{ex}
A final example shall demonstrate, that in the extremal cases there exist very
simple non-rich and non-band-dominated operators in $\pb{\Xb}$ for which the assertion
of Theorem \ref{TMain} does not hold: Consider the flip operator
$J:(x_i)\mapsto(x_{-i})$ and the operator $A=I+J$. Then both $J$ and $A$ have
empty operator spectrum, $J$ is invertible, but $A$ is not $\Pc$-Fredholm.
\end{ex}

\subparagraph{Acknowledgements.}
We want to thank Albrecht B\"ottcher, Simon Chandler-Wilde, Vladimir Rabinovich, Steffen Roch and Bernd Silbermann,
whose work on Fredholm properties of infinite matrices has influenced us very much and has left its imprints in many places
of our work and, in particular, of this paper.
We would also like to thank Raffael Hagger from TUHH for helpful discussions and proof-checking.



\begin{thebibliography}{}
  \bibitem{Agarwal}
    R.~P.~Agarwal, D.~O'Regan,
    \emph{Infinite Interval Problems for Differential, Difference and Integral Equations},
    Kluwer Academic Publishers, Dordrecht, Boston, London 2001.
  \bibitem{AnseloneSloan}
    P.~M.~Anselone, I.H.~Sloan,
    Integral equations on the half-line, 
    \emph{J. Integral Eq.} {\bf 9} (1985), 3-23.
  \bibitem{ArensCWHas2}
    T.~Arens, S.~N.~Chandler-Wilde, K.~Haseloh,
    Solvability and spectral properties of integral equations on the
    real line. II. $L^p$-spaces and applications, 
    \emph{J. Integral Equations Appl.} {\bf 15} (2003), no. 1, 1-35.
  \bibitem{BoeTrunc}
    A. B\"ottcher,
    Norms of inverses, spectra, and pseudospectra of large truncated Wiener-Hopf
    operators and Toeplitz matrices,
    \emph{New York J. Math.} \textbf{3} (1997), 1-31.
  \bibitem{Analysis}
    A. Böttcher, B. Silbermann,
    \emph{Analysis of Toeplitz Operators},
		Second edition,
		Springer-Verlag, Berlin, 2006.
  \bibitem{CarmonaLacroix91}
    R.~Carmona, J.~Lacroix,
    \emph{Spectral Theory of Random Schr\"odinger Operators},
    Birkh\"auser, Boston, 1990.
  \bibitem{CW.Heng.ML:UpperBounds}
    S.~N.~Chandler-Wilde, R.~Chonchaiya, M.~Lindner,
    Upper Bounds on the Spectra and Pseudospectra of Jacobi and Related Operators,
    {\it in preparation}.
  \bibitem{LiChWRough}
    S. N. Chandler-Wilde, M. Lindner,
    Boundary integral equations on unbounded rough surfaces: Fredholmness and the Finite Section Method,
    \emph{J. Integral Equations Appl.} \textbf{20} (2008), 13-48.
  \bibitem{LiChWFavard}
    S. N. Chandler-Wilde, M. Lindner,
    Sufficiency of Favard's condition for a class of band-dominated operators on the axis,
    \emph{J. Funct. Anal.} \textbf{254} (2008), 1146-1159.
  \bibitem{LiChW}
    S. N. Chandler-Wilde, M. Lindner,
    \emph{Limit Operators, Collective Compactness and the Spectral Theory of Infinite Matrices},
    Mem. Amer. Math. Soc. \textbf{210} (2011), No. 989.
  \bibitem{ChWZh}
    S. N. Chandler-Wilde, B. Zhang,
    A generalised collectively compact operator theory with an
    application to second kind integral equations on unbounded domains,
    \emph{J. Integral Equations Appl.} \textbf{14} (2002), 11-52.
  \bibitem{Davies}
    E.~B.~Davies,
    Spectral theory of pseudo-ergodic operators,
    \emph{Commun. Math. Phys.} {\bf 216} (2001), 687-704.
  \bibitem{Favard}
    J. A. Favard,
		Sur les equations differentiales a coeffcients presque periodiques,
 		\emph{ Acta Math.} \textbf{51} (1927) 31-81.
  \bibitem{GoFe}
    I. C. Gohberg, I. A. Feldman,
    \emph{Convolution equations and projection methods for their solution}
    Nauka, Moscow 1971; Engl. transl.:
    Amer. Math. Soc. Transl. of Math. Monographs 41, Providence, R. I., 1974; German transl.:
    Akademie-Verlag, Berlin 1974.
  \bibitem{GoKre}
    I.~C.~Gohberg, M.~G.~Krein,
    Systems of integral equations on the semi-axis with kernels
   depending on the difference of arguments, 
   {\em Usp. Mat. Nauk} {\bf 13} (1958), no. 5, 3-72 (Russian).
 \bibitem{HaRoSi}
    R. Hagen, S. Roch, B. Silbermann,
    \emph{Spectral Theory of Approximation Methods for Convolution Equations},
    Operator Theory: Adv. and Appl. \textbf{74}, Birkh\"auser Verlag, Basel, Boston, Berlin, 1995.
  \bibitem{Standard}
    R. Hagen, S. Roch, B. Silbermann,
    \emph{$C^*$-Algebras and Numerical Analysis},
   Marcel Dekker, Inc., New York, Basel, 2001.
  \bibitem{HaLiSe}
		R. Hagger, M. Lindner, M. Seidel,
		Essential pseudospectra and essential norms of band-dominated operators,
		\emph{in preparation}.
  \bibitem{Jorgens}
    K.~J\"orgens,
    {\em Linear Integral Operators}, 
    Pitman, Boston, 1982.
  \bibitem{KozakSimo}
    A. V. Kozak, I. B. Simonenko,
    Projection methods for solving multidimensional discrete convolution equations,
    \emph{Sib. Mat. Zh.} \textbf{21} (1980), 2, 119-127.
  \bibitem{LangeR}
    B. V. Lange, V. S. Rabinovich,
    On the Noether property of multidimensional discrete convolutions,
    \emph{Mat. Zametki} \textbf{37} (1985), No. 3, 407-421.
  \bibitem{LastSimon06}
    Y.~Last, B.~Simon,
    The essential spectrum of Schr{\"o}dinger, Jacobi and CMV operators,
    \emph{J.\ Anal.\ Math.} {\bf 98} (2006) 183--220.
  \bibitem{LiEssBdd}
    M. Lindner,
    The Finite Section Method in the Space of Essentially Bounded Functions: An Approach Using Limit Operators,
    \emph{Num. Funct. Anal. Optim.} \textbf{24} (2003), No. 7-8, 863-893.
  \bibitem{DissMarko}
    M. Lindner,
    \emph{Limit Operators and Applications on the Space of Essentially Bounded Functions},
    Dissertationsschrift, Chemnitz, 2003.
  \bibitem{Marko}
    M. Lindner,
    \emph{Infinite Matrices and their Finite Sections},
   Birkh\"auser Verlag, Basel, Boston, Berlin, 2006.
  \bibitem{MarkoWiener}
    M. Lindner,
    Fredholmness and index of operators in the Wiener algebra are independent of the
    underlying space,
    \emph{J. Oper. Matrices} \textbf{2} (2008), No. 2, 297-306.
  \bibitem{HabilMarko}
    M. Lindner,
    \emph{Fredholm Theory and Stable Approximation of Band Operators and Their Generalisations},
   Habilitationsschrift, Chemnitz, 2009.
  \bibitem{LiSi}
    M. Lindner, B. Silbermann,
    Invertibility at Infinity of Band-Dominated Operators on the Space of Essentially Bounded Functions,
    \emph{Operator Theory: Adv. and Appl.} \textbf{147} (2003), 295-323.
  \bibitem{Muh1}
    E. M. Muhamadiev,
    On the invertibility of differential operators in the space of
		continuous functions bounded on the axis,
    \emph{Soviet Math Dokl.} \textbf{12} (1971), 49-52.
  \bibitem{Muh2}
    E. M. Muhamadiev,
    On the invertibility of elliptic partial differential operators,
    \emph{Soviet Math Dokl.} \textbf{13} (1971), 1122-1126.
  \bibitem{Muh3}
    E. M. Muhamadiev,
    On normal solvability and Noether property of elliptic operators in spaces of functions on $\Rb^n$, Part I,
    \emph{Zapiski nauchnih sem. LOMI} \textbf{110} (1981), 120-140 (Russian).
  \bibitem{Muh4}
    E. M. Muhamadiev,
    On normal solvability and Noether property of elliptic operators in spaces of functions on $\Rb^n$, Part II,
    \emph{Zapiski nauchnih sem. LOMI} \textbf{138} (1985), 108-126 (Russian).
  \bibitem{PasturFigotin}
    L.~A.~Pastur, A.~L.~Figotin,
    \emph{Spectra of Random and Almost-Periodic Operators},
    Springer, Berlin 1992.
  \bibitem{NumAna}
    S. Prössdorf, B. Silbermann,
    \emph{Numerical Analysis for Integral and Related Operator Equations},
    Birkhäuser Verlag, Basel, Boston, Berlin, 1991.
  \bibitem{RochPS}
    V. S. Rabinovich, S. Roch,
    Algebras of approximation sequences: Spectral and pseudospectral approximation of band-dominated operators,
    Preprint 2186, TU Darmstadt, 2001.
  \bibitem{RaRo}
    V. S. Rabinovich, S. Roch,
    Local Theory of the Fredholmness of band-dominated operators with slowly oscillating coefficients,
    \emph{Operator Theory: Adv. and Appl.} \textbf{135} (2002), 267-291.
  \bibitem{LpInd}
    V. S. Rabinovich, S. Roch,
    The Fredholm index of locally compact band-dominated operators on $L^p(\Rb)$,
    \emph{Integral Equations Oper. Theory} \textbf{57} (2007), 263-281.
  \bibitem{RaRo:disc.struc}
    V. S. Rabinovich, S. Roch,
    Fredholm Properties of Band-dominated Operators on Periodic Discrete Structures,
    \emph{Complex Analysis and Operator Theory} {\bf 2} (2008), 637-668.r
  \bibitem{RochFredInd}
    V. S. Rabinovich, S. Roch, J. Roe,
    Fredholm indices of band-dominated operators,
    \emph{Integral Equations Oper. Theory} \textbf{49} (2004), No. 2, 221-238.
  \bibitem{RochFredTh}
    V. S. Rabinovich, S. Roch, B. Silbermann,
    Fredholm theory and finite section method for band-dominated operators,
    \emph{Integral Equations Oper. Theory} \textbf{30} (1998), No. 4, 452-495.
  \bibitem{RaRoSiAlgFSM}
    V. S. Rabinovich, S. Roch, B. Silbermann,
    Algebras of approximation sequences: Finite sections of band-dominated operators,
    \emph{Acta Appl. Math.} \textbf{65} (2001), No. 3, 315-332.
  \bibitem{RaRoSiL2}
    V. S. Rabinovich, S. Roch, B. Silbermann,
    Band-dominated operators with operator-valued coefficients, their Fredholm
    properties and finite sections,
    \emph{Integral Equations Oper. Theory} \textbf{40} (2001), No. 3, 342-381.
  \bibitem{LimOps}
    V. S. Rabinovich, S. Roch, B. Silbermann,
    \emph{Limit Operators and Their Applications in Operator Theory},
    Birkh\"auser Verlag, Basel, Boston, Berlin, 2004.
  \bibitem{RochOnFS}
    V. S. Rabinovich, S. Roch, B. Silbermann,
    On finite sections of band-dominated operators,
    \emph{Operator Theory: Adv. and Appl.} \textbf{181} (2008), 385-391.
  \bibitem{Remling2}
    C.~Remling,
    The Absolutely Continuous Spectrum of One-dimensional Schr\"odinger Operators,
    \emph{Math. Phys., Anal. and Geom.} {\bf 10} (2007), 359-373.
  \bibitem{Rochlp}
    S. Roch,
    Band-dominated operators on $l^p$-spaces: Fredholm indices and finite sections,
    \emph{Acta Sci. Math. (Szeged)} \textbf{70} (2004), 3/4, 783-797.
  \bibitem{RochGross}
    S. Roch,
    \emph{Finite Sections of Band-Dominated Operators},
    Mem. Amer. Math. Soc., \textbf{191} (2008), No. 895.
  \bibitem{NonStrongly}
    S. Roch, B. Silbermann,
    Non-strongly converging approximation methods,
    \emph{Demonstr. Math.} \textbf{22} (1989), 3, 651-676.
  \bibitem{SeDis}
    M. Seidel,
    On some Banach algebra tools in operator theory,
		Dissertationsschrift, Chemnitz, 2012.
  \bibitem{SeFre}
    M. Seidel,
    Fredholm theory for band-dominated and related operators: a survey,
		\emph{Linear Algebra Appl.} \textbf{445} (2014), 373-394.
  \bibitem{SeSi2}
    M. Seidel, B. Silbermann,
    Banach algebras of structured matrix sequences,
		\emph{Linear Algebra Appl.} \textbf{430} (2009), 1243-1281.
  \bibitem{SeSi3}
    M. Seidel, B. Silbermann,
    Banach algebras of operator sequences,
    \emph{Oper. Matrices}  \textbf{6} (2012), No. 3, 385-432.
  \bibitem{SeSi4}
    M. Seidel, B. Silbermann,
    Finite sections of band-dominated operators - Norms, condition numbers and
		pseudospectra,
		 \emph{Operator Theory: Adv. and Appl.} \textbf{228} (2013), 375-390.
  \bibitem{Sim}
    I. B. Simonenko,
    On multidimensional discrete convolutions,
    \emph{Mat. Issled.} \textbf{3} (1968), 1, 108-127 (Russian).
\end{thebibliography}
\end{document}